\renewcommand{\to}{\rightarrow}
\newcommand{\R}{\mathbb{R}}
\newcommand{\N}{\mathbb{N}}
\newcommand{\Z}{\mathbb{Z}}
\newcommand{\C}{\mathcal{C}}
\newcommand{\attach}{\lhd}
\theoremstyle{plain}
\newtheorem{thm}{Theorem}[section]
\newtheorem{cor}[thm]{Corollary}
\newtheorem{pro}[thm]{Problem}
\newtheorem{lem}[thm]{Lemma}
\newtheorem{prop}[thm]{Proposition}
\theoremstyle{definition}
\newtheorem{defn}[thm]{Definition}
\begin{document}

\begin{abstract}
Whiteley \cite{wh} gives a complete characterization of the infinitesimal flexes of complete bipartite frameworks. Our work generalizes a specific infinitesimal flex to include joined graphs, a family of graphs that contain the complete bipartite graphs. We use this characterization to identify new families of counterexamples, including infinite families, in $\R^5$ and above to Hendrickson's conjecture on generic global rigidity.
\end{abstract}

\title{Rigidity of Graph Joins and Hendrickson's Conjecture}
\thanks{Thanks to Dylan Thurston, Andrew Fanoe, and Kiril Ratmanski for their support and suggestions throughout this project. This paper was partially supported by NSF RTG Grant DMS 07-39392.}
\author{Timothy Sun}
\address{Department of Mathematics, Columbia University, New York, NY 10027}
\email{ts2578@columbia.edu}
\author{Chun Ye}
\address{Department of Mathematics, Columbia University, New York, NY 10027}
\email{cy2214@columbia.edu}
\maketitle

\section{Introduction}

A $d$-dimensional framework of a graph is a mapping from the vertices of the graph to points in Euclidean $d$-space. A natural question to ask is whether a graph is \emph{locally rigid}, i.e. can we can move the vertices of the framework while preserving edge lengths? Furthermore, when a framework is locally rigid, another question to ask is whether the graph is \emph{globally rigid}, i.e. do the edge lengths uniquely define a framework up to Euclidean motions? 

Hendrickson \cite{bh} found two necessary conditions for a graph to be generically globally rigid and conjectured that they were also sufficient. Connelly \cite{cn} discovered a family of complete bipartite graphs in $\R^3$ and higher that were counterexamples to Hendrickson's conjecture:

\begin{thm}[Connelly \cite{cn}]
\label{cg}
If $a,b\geq d+2$ and $a+b=\binom{d+2}{2}$, then $K_{a,b}$ is generically almost-globally rigid in $\R^d$.
\end{thm}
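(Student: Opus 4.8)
The plan is to unpack \emph{almost-global rigidity} into two assertions about a generic framework of $K_{a,b}$: that it is locally rigid, and that up to congruence there is exactly one \emph{other} framework with the same edge lengths. The first assertion should follow immediately from Whiteley's \cite{wh} characterization of the infinitesimal flexes. The space of quadrics in $\R^d$ (quadratic polynomials of degree at most two) has dimension $1+d+\binom{d+1}{2}=\binom{d+2}{2}$, and each of the $a+b=\binom{d+2}{2}$ vertices imposes one linear vanishing condition on it; since a generic point set makes these conditions independent, no nonzero quadric passes through the configuration. By Whiteley's correspondence the nontrivial infinitesimal flexes are governed by exactly such quadrics, so every flex is trivial and the framework is infinitesimally, hence locally, rigid. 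The hypotheses $a,b\geq d+2$ enter here to keep each color class in sufficiently general position for the correspondence to apply and for the $\binom{d+2}{2}$ conditions to be genuinely independent.

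For the enumeration of equivalent frameworks I would pass to squared edge lengths. Writing $P,Q$ for the matrices whose rows are the two families of points, the identity $|p_i-q_j|^2=|p_i|^2-2\,p_i\cdot q_j+|q_j|^2$ rearranges to show that any equivalent framework $(P',Q')$ must satisfy $P'Q'^{T}=PQ^{T}-\tfrac12(f\mathbf 1^{T}+\mathbf 1 g^{T})$, where $f_i=|p_i|^2-|p'_i|^2$ and $g_j=|q_j|^2-|q'_j|^2$. Since $P'Q'^{T}$ has rank at most $d$, the problem becomes: for which $(f,g)$ does subtracting the rank-$\leq 2$ matrix $\tfrac12(f\mathbf 1^{T}+\mathbf 1 g^{T})$ from $PQ^{T}$ return the rank to $d$, and do the resulting factors reproduce the norms encoded in $(f,g)$? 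Equivalently, lifting each point to the null vector $(1,x,|x|^2)$ of the Lorentzian form $B(u,v)=u_0v_{d+1}+u_{d+1}v_0-2\,\mathbf u\cdot\mathbf v$ on $\R^{d+2}$, the equivalent frameworks are exactly the completions of the partial Gram matrix (cross-block prescribed, diagonal zero) that retain rank $d+2$ and the correct signature, taken modulo the subgroup of $O(d+1,1)$ fixing the point at infinity, which induces the Euclidean congruences.

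The heart of the argument is to show this solution set consists generically of exactly two points. For existence of a nontrivial mate I would use the threshold condition to place the configuration in a critically determined position relative to the space of quadrics, then reflect one color class through the associated quadric to obtain a second, non-congruent realization with the same cross-distances; equivalently, the rank-$d$ matrix above admits a second factorization $P'Q'^{T}$ inequivalent to the first. The uniqueness half---that at most one nontrivial mate exists---is where the real work lies and is, I expect, the main obstacle. I would attack it with a degree and dimension count on the determinantal variety of Gram completions, aiming to show it is degree two over the congruence classes, or by organizing the two realizations as the two sheets of a double cover (the two square roots, or two rulings of the governing quadric, that arise when a configuration is reconstructed from its Gram data), forcing a $\mathbb{Z}/2$ ambiguity and excluding any further realizations. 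Together the two assertions give that $K_{a,b}$ is rigid yet has exactly one non-congruent equivalent framework, i.e.\ is almost-globally rigid; in particular it is generically rigid but not globally rigid, which is precisely what makes these graphs candidate counterexamples to Hendrickson's conjecture.
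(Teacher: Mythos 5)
First, note that the paper itself gives no proof of this statement --- it is imported wholesale from Connelly \cite{cn} --- so there is no internal argument to compare against; what follows is an assessment of your attempt on its own terms. The local-rigidity half is fine and is exactly the Bolker--Roth/Whiteley count the paper uses elsewhere: $\binom{d+2}{2}$ generic points impose independent linear conditions on the $\binom{d+2}{2}$-dimensional space of quadrics, so no quadric flex exists. But there are two genuine problems with the rest. The first is that you are proving the wrong statement. In this paper ``generically almost-globally rigid'' means ``a counterexample to Hendrickson's conjecture,'' i.e.\ $(d+1)$-connected, generically \emph{redundantly} rigid, and not generically globally rigid. You never address connectivity (immediate, since $\min(a,b)\geq d+2$) or redundant rigidity (which is Corollary~\ref{bir}, resting on the stress-space dimension $(a-d-1)(b-d-1)>0$ from Theorem~\ref{brz} together with Proposition~\ref{red}); instead you aim at ``exactly one non-congruent mate,'' which is neither required by the definition nor, as you concede, established by your sketch.

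The second problem is that the one part that is both necessary and nontrivial --- exhibiting a non-congruent equivalent framework at a \emph{generic} configuration --- is not actually argued. ``Reflect one color class through the associated quadric'' is not a well-defined length-preserving operation, and more to the point there is no such quadric: your own local-rigidity argument shows that no quadric passes through a generic configuration of $\binom{d+2}{2}$ points. Connelly's mechanism is different: start from a \emph{special} configuration $r$ whose points do lie on a quadric (a codimension-one condition), take the resulting quadric flex $r'$, and use the averaging/de-averaging identity --- $(G,r+r')$ and $(G,r-r')$ are equivalent whenever $r'$ is an infinitesimal flex of $(G,r)$ --- to produce two equivalent, non-congruent frameworks; a dimension count shows the configurations $r+r'$ so obtained sweep out a full-dimensional set, and the genericity of global rigidity (the Connelly/Gortler--Healy--Thurston theorem quoted in the paper) finishes the argument. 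Your Lorentzian/Gram-completion setup is a reasonable way to organize the equivalence classes of bipartite frameworks, but as written it neither produces the second realization nor bounds their number, so the non-global-rigidity half remains open in your write-up.
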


Work has been done on identifying counterexamples that are subgraphs of this family in \cite{fj}. Our work extends Theorem \ref{cg} in the opposite direction, exhibiting a family of counterexamples that have Connelly's graphs as subgraphs. Connelly and Whiteley \cite{cw} showed that a graph operation known as coning preserves local and global rigidity. In particular, coning can be used to construct new counterexamples in higher dimensions from known counterexamples. We identify counterexamples that are subgraphs of coned graphs.

Frank and Jiang \cite{fj} found a graph that could be ``attached" to graphs that are sufficiently rigid in $\R^5$ to form an infinite number of counterexamples to Hendrickson's conjecture. However, one step of the proof was aided by a computer program, so their result could not be immediately generalized to higher dimensions. We give a conceptual proof of generic local rigidity for their graph and similar graphs in order to exhibit graph attachments in higher dimensions.

In this paper, we introduce the notion of the \emph{quadric rigidity matrix}, which generalizes one of Whiteley's \cite{wh} conditions for infinitesimal rigidity. We use the quadric rigidity matrix to characterize all infinitesimal flexes of balanced joined graphs and for the construction of the aforementioned families of graphs. 

\section{Graph Theory Preliminaries}

A \emph{graph} $G = (V,E)$ is a 2-tuple consisting of a set $V = \{v_1, v_2,..., v_{|V|}\}$ of vertices and a set $E \subseteq V^{(2)}$ of edges between the vertices. From our choice of how we defined the edge set, all graphs in this paper are undirected and simple. We denote an edge connecting vertices $v_i$ and $v_j$ as $v_iv_j$ and say that $v_i$ and $v_j$ are \emph{adjacent}.

If a subgraph has the same vertex set, we call such a subgraph a \emph{factor}. The \emph{edge complement} of a graph $G=(V,E)$, denoted $\overline{G} = (V,E')$, is the graph where $v_iv_j \in E' \Leftrightarrow v_iv_j \not\in E$, or equivalently, $E' = V^{(2)}-E$. Two graphs $G$ and $H$ are \emph{isomorphic} if there exists a bijective function $\phi : V_G \to V_H$ such that $v_1v_2 \in E_G$ if and only if $\phi(v_1)\phi(v_2) \in E_H$. 

A graph is connected if, for all pairs of vertices $v_i$ and $v_j$, there exists a path of vertices starting from $v_i$ and ending at $v_j$. A graph is \emph{$k$-(vertex)-connected} if deleting any subset of $k-1$ vertices and edges incident on those vertices results in a connected graph.

The \emph{disjoint union} of two graphs $G$ and $H$, denoted $G\cup H$, is the graph formed by the disjoint union of the vertex sets and edge sets. The \emph{graph join} of graphs $G$ and $H$, denoted $G+H$, is the graph whose vertex set is $V_G \cup V_H$ and whose edge set is $E_G \cup E_H \cup \{v_gv_h | v_g \in V_G, v_h \in V_H\}$. That is, $G+H$ results from taking $G$ and $H$ and adding all possible edges between vertices of $G$ and vertices of $H$. We call such a graph a \emph{joined graph}, and any edge in $E_G \cup E_H$ is \emph{extraneous}. We will call a joined graph $G+H$ \emph{balanced} if $|V_G|, |V_H| \geq d+1$.

\begin{figure}[ht]
\begin{tikzpicture}
[v/.style={circle,draw=black!100,fill=red!100,thick,inner sep=2pt}]
\foreach \i in {1,...,6} \node (v\i) at (2*{cos(60*\i)},2*{sin(60*\i)})[v]{};
\foreach \i in {1,...,6} \foreach \j in {1,...,6} \draw [-] (v\i) to (v\j);
\end{tikzpicture}
\caption{A complete graph.}
\label{fig-complete}
\end{figure}
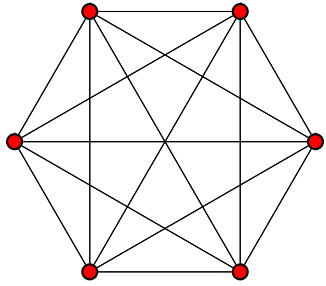

The graph join operation allows us to define familiar terms in new ways. The \emph{complete graph} on $i$ vertices, denoted $K_i$, is defined recursively, where $K_1$ is a single vertex, and $K_i = K_1 + K_{i-1}$ for $i > 1$. Figure \ref{fig-complete} is the graph $K_6$. For convenience, we define $E_i := \overline{K_i}$, the graph on $i$ vertices with no edges. The \emph{complete bipartite graph} on $a$ and $b$ vertices, denoted $K_{a,b}$, is $E_a + E_b$. $V_{E_a}$ and $V_{E_b}$ are referred to as the two \emph{bipartite classes}. Figure \ref{fig-bipartite} contains examples of a complete bipartite graph and a joined graph.

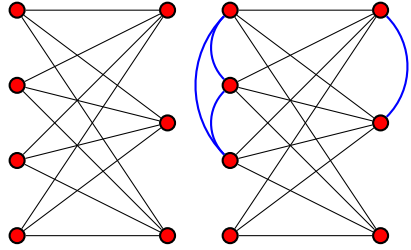
\begin{figure}[ht]
\begin{tikzpicture}
[v/.style={circle,draw=black!100,fill=red!100,thick,inner sep=2pt}]
\foreach \i in {1,...,4} \node (v\i) at (0,3-\i)[v]{};
\foreach \i in {5,...,7} \node (v\i) at (2,2-{(\i-5)*1.5})[v]{};
\foreach \i in {1,...,4} \foreach \j in {5,...,7} \draw [-] (v\i) to (v\j);
\end{tikzpicture}
\begin{tikzpicture}
[v/.style={circle,draw=black!100,fill=red!100,thick,inner sep=2pt}]
\foreach \i in {1,...,4} \node (v\i) at (0,3-\i)[v]{};
\foreach \i in {5,...,7} \node (v\i) at (2,2-{(\i-5)*1.5})[v]{};
\foreach \i in {1,...,4} \foreach \j in {5,...,7} \draw [-] (v\i) to (v\j);
\draw[blue, thick, bend right=45] (v1) to (v2);
\draw[blue, thick, bend right=45] (v1) to (v3);
\draw[blue, thick, bend right=45] (v2) to (v3);
\draw[blue, thick, bend left=45] (v5) to (v6);
\end{tikzpicture}
\caption{The complete bipartite graph $K_{4,3}$ and the joined graph $(K_3\cup K_1)+(K_2\cup K_1)$, respectively. }
\label{fig-bipartite}
\end{figure}

A \emph{vertex amalgamation} $(G;u_1,u_2,...,u_i)*(H;v_1,v_2,...,v_i)$ is the graph $(G\cup H)/R$, where $R$ is the equivalence relation $\{u_1=v_1, u_2=v_2,...,u_i=v_i\}$. Intuitively, a vertex amalgamation takes vertices of two graphs and pastes them together to get the resulting graph, as in Figure \ref{fig-amal}.

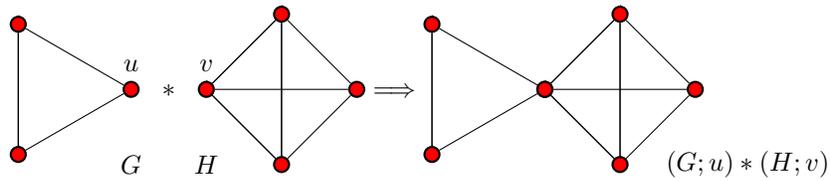
\begin{figure}[ht]
\begin{tikzpicture}
[v/.style={circle,draw=black!100,fill=red!100,thick,inner sep=2pt}]
\foreach \i in {1,2,3}{
  \node (v\i) at ({cos(120*(\i -1))},{sin(120*(\i -1))})[v]{};
  \node (w\i) at (5.5+{cos(120*(\i -1))},{sin(120*(\i -1))})[v]{};
}
\foreach \i in {4,...,7}{
  \node (v\i) at (3+{cos(90*\i)},{sin(90*\i)})[v]{};
  \node (w\i) at (7.5+{cos(90*\i)},{sin(90*\i)})[v]{};
}
\node at (1.5,-0.06){*};
\node at (4.5,-0.04){$\Longrightarrow$};
\node [above] at (v1.north){$u$};
\node [above] at (v6.north){$v$};
\node at (1,-1){$G$};
\node at (2,-1){$H$};
\node at (9.2,-1){$(G;u)*(H;v)$};
\foreach \i in {1,2,3}
  \foreach \j in {2,3}{
    \draw [-] (v\i) to (v\j);
    \draw [-] (w\i) to (w\j);
  }
\foreach \i in {4,...,7}
  \foreach \j in {5,...,7}{
    \draw [-] (v\i) to (v\j);
    \draw [-] (w\i) to (w\j);
  }
\end{tikzpicture}
\caption{The vertex amalgamation $(G;u)*(H;v)$.}
\label{fig-amal}
\end{figure}

\section{Frameworks}

A \emph{$d$-dimensional framework} is a 2-tuple $(G,p)$ where $G$ is a graph and $p$ is a mapping, known as a \emph{configuration}, that takes elements of $V_G$ to $\R^d$. We assume that for a configuration, not all the vertices lie on a hyperplane. Two frameworks $(G,p) = (G; p_1, p_2,...,p_v)$ and $(G,q) = (G;  q_1, q_2,...,q_v)$ are \emph{equivalent} if for all pairs of adjacent vertices $v_i$ and $v_j$, $||p_i-p_j||=||q_i-q_j||$. They are \emph{congruent} if all pairwise distances between points are equal. A \emph{generic configuration} is a mapping in which the coordinates of the vertices are algebraically independent over $\Z$; that is, no non-trivial polynomial with integer coeffients over the coordinates is 0. A \emph{generic framework} is a framework whose configuration is generic.

A framework $(G,p)$ is said to be \emph{globally rigid} if any equivalent framework $(G,q)$ is also congruent. Alternatively, any equivalent configuration can be reached by some Euclidean motion. A framework $(G,p)$ is said to be \emph{locally flexible} if there exists a parametric curve in $\R^{vd}$ of equivalent configurations that is not a Euclidean motion. A graph that is not locally flexible is \emph{locally rigid}.

A graph is \emph{generically locally rigid} (GLR) if any generic framework is locally rigid. Similarly, a graph is \emph{generically globally rigid} (GGR) if any generic framework is globally rigid. A graph is \emph{generically redundantly rigid} (GRR) if deleting any edge from the graph leaves a GLR graph. 

\begin{prop}
\label{ggrsub}
If $G=(V,E)$ is a graph that is not generically globally rigid, then any factor $G'$ is also not generically globally rigid.
\end{prop}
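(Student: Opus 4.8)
The plan is to deduce the statement directly from the definitions, exploiting the fact that passing to a factor only deletes edges while leaving the vertex set unchanged. The guiding principle is that global rigidity is monotone under edge addition: adding edges can only help the edge-length data pin down the configuration, never hurt. So I would take a generic framework of $G$ that witnesses the failure of global rigidity and check that the \emph{same} configuration witnesses the failure for the factor $G'$.

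Concretely, write $G' = (V,E')$ with $E' \subseteq E$. Since $G$ is not GGR, the definition supplies a generic configuration $p$ together with a configuration $q$ such that $(G,p)$ and $(G,q)$ are equivalent but not congruent. I would then proceed in three steps. First, $p$ remains a generic configuration when regarded as a configuration for $G'$: genericity is a condition on the coordinates of the image points alone (algebraic independence over $\Z$) and makes no reference to the edge set, so it is inherited verbatim. Second, because $E' \subseteq E$, every pair of vertices adjacent in $G'$ is also adjacent in $G$; hence the equalities $\|p_i - p_j\| = \|q_i - q_j\|$ that hold across all edges of $G$ in particular hold across all edges of $G'$, so $(G',p)$ and $(G',q)$ are equivalent. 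Third, congruence is defined in terms of \emph{all} pairwise distances and is therefore independent of the graph, so $(G',p)$ and $(G',q)$ remain non-congruent.

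Combining these observations, $(G',p)$ is a generic framework that admits an equivalent, non-congruent framework, so it is not globally rigid; since a single generic framework of $G'$ fails to be globally rigid, $G'$ is not GGR, as desired.

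I do not expect a genuinely hard step here, as everything follows immediately from unwinding the definitions. The one point that requires care — and that I would state explicitly — is the first step: genericity is a property of the configuration and not of the framework, so the witness configuration $p$ for $G$ may be reused unchanged for $G'$. This is precisely what rules out the (tempting but mistaken) worry that deleting edges might force us to re-choose $p$ and thereby lose control of its non-congruent partner $q$.
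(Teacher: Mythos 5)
Your proposal is correct and follows exactly the paper's argument: take the generic witness pair $(G,p)$, $(G,q)$ of equivalent, non-congruent frameworks and observe that, since $E' \subseteq E$, the same pair witnesses the failure of global rigidity for $G'$. You simply spell out the three routine checks (genericity, equivalence, non-congruence) that the paper leaves implicit.
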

\begin{proof}
Suppose there existed two equivalent, non-congruent frameworks $(G,p)$ and $(G,q)$ for generic $p$ and $q$. Then $(G',p)$ and $(G', q)$ are equivalent, non-congruent frameworks.
\end{proof}

The following theorems demonstrate that generic local and global rigidity are properties of the underlying graph, and not the framework.

\begin{thm}
If any generic framework of a graph $G$ is locally rigid, then all generic frameworks of $G$ are locally rigid.
\end{thm}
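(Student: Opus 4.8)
The plan is to reduce local rigidity at a generic configuration to a rank condition on a matrix of polynomials, and then to observe that this rank takes the same value at every generic configuration. I would introduce the \emph{edge measurement map} $f_G : \R^{vd} \to \R^{|E|}$ sending a configuration $p = (p_1,\dots,p_v)$ to the vector $(\|p_i - p_j\|^2)_{v_iv_j \in E}$ of squared edge lengths; two configurations are equivalent in the sense of the paper exactly when they share an image under $f_G$. The Jacobian of $f_G$ at $p$ is, up to a factor of $2$, the \emph{rigidity matrix} $R(p)$, whose entries are the coordinate differences $p_i - p_j$ and are therefore linear forms in the coordinates of $p$ with integer coefficients.

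First I would record the purely algebraic fact that $\operatorname{rank} R(p)$ is the same at every generic configuration. Let $r_0 = \max_{p}\operatorname{rank} R(p)$. The locus where $\operatorname{rank} R(p) < r_0$ is the common zero set of all $r_0 \times r_0$ minors of $R(p)$, each of which is a polynomial with integer coefficients; since $r_0$ is attained somewhere, at least one such minor is not identically zero. A nonzero integer-coefficient polynomial cannot vanish at a configuration whose coordinates are algebraically independent over $\Z$, so every generic $p$ satisfies $\operatorname{rank} R(p) = r_0$.

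Next I would invoke the standard equivalence between local and infinitesimal rigidity at a maximum-rank configuration. Because the $\binom{d+1}{2}$-dimensional family of Euclidean motions always lies in the kernel of $R(p)$ for a configuration that spans $\R^d$ (which we have assumed), one has $r_0 \le vd - \binom{d+1}{2}$, with equality precisely when $p$ is infinitesimally rigid. If $r_0 = vd - \binom{d+1}{2}$, then near $p$ the implicit function theorem presents $f_G^{-1}(f_G(p))$ as a manifold of dimension $\binom{d+1}{2}$, which must then coincide locally with the congruence orbit of $p$, giving local rigidity. Conversely, if $r_0 < vd - \binom{d+1}{2}$, then since $\operatorname{rank} R$ is maximal at $p$ and hence locally constant there, the constant rank theorem makes the fiber a manifold of dimension $vd - r_0 > \binom{d+1}{2}$, strictly larger than the congruence orbit; thus arbitrarily close to $p$ there are equivalent but non-congruent configurations, and $p$ is locally flexible.

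Combining these, a generic framework of $G$ is locally rigid if and only if $r_0 = vd - \binom{d+1}{2}$, a condition that does not depend on the chosen generic configuration. Hence if one generic framework is locally rigid, this equality holds, and every generic framework is locally rigid. I expect the main obstacle to be the third step: the passage from the algebraic rank condition to genuine local rigidity relies on the constant rank theorem and on verifying that the smooth fiber meets the congruence orbit in the correct dimension. This is exactly the content of the Asimow--Roth theorem, and it is where genericity (which guarantees that the rank is locally constant) is essential.
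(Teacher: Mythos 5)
Your proposal is correct and follows exactly the route the paper intends: it establishes that the rank of the rigidity matrix (whose minors are integer-coefficient polynomials) is constant on generic configurations, and then invokes the Asimow--Roth equivalence between local and infinitesimal rigidity at maximal-rank configurations, which is precisely Theorem \ref{inf} that the paper cites as the source of this corollary. The paper gives no further detail, so your write-up simply supplies the standard argument behind that citation.
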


The above result is a corollary of Theorem \ref{inf}.

\begin{thm}[Connelly \cite{c2}, Gortler, Healy, Thurston \cite{ght}]
If any generic framework of a graph $G$ is globally rigid, then all generic frameworks of $G$ are globally rigid.
\end{thm}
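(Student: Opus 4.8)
The plan is to replace the quantifier-laden definition of global rigidity with an equivalent algebraic condition whose genericity is transparent. The obstacle to arguing directly is that global rigidity of $(G,p)$ asserts that \emph{every} framework $(G,q)$ equivalent to it is congruent to it; the set of ``bad'' configurations is cut out by an infinite family of conditions and is not visibly an algebraic subset of $\R^{nd}$, so one cannot simply invoke the fact that a single nonzero polynomial vanishes only on a proper subvariety. Following Connelly \cite{c2} and Gortler--Healy--Thurston \cite{ght}, I would instead prove the \emph{stress-matrix characterization}: for a graph on $n \geq d+2$ vertices, a generic framework in $\R^d$ is globally rigid if and only if it admits an equilibrium stress whose associated stress matrix $\Omega$ attains the maximal possible rank $n-d-1$. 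Granting this equivalence, the theorem follows, because maximal stress-matrix rank is a generic invariant, as explained next.

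First I would set up the measurement map $\mu_G \colon (\R^d)^n \to \R^{|E|}$ sending a configuration to its vector of squared edge lengths, whose Jacobian is the rigidity matrix. The space of equilibrium stresses of $(G,p)$ is the left null space of the rigidity matrix, and its dimension is constant on a Zariski-open set of configurations; the stress matrix $\Omega$ is recovered from a stress by a fixed linear rearrangement. The maximum of $\operatorname{rank}\Omega$ over all stresses is therefore a lower-semicontinuous, generically constant function of $p$, so it attains its generic value on a dense Zariski-open set and takes that same value at \emph{every} generic configuration. Consequently the condition ``maximal stress rank equals $n-d-1$'' holds either for all generic frameworks of $G$ or for none, which is precisely the content of the theorem once the characterization is in hand.

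The substance lies in proving the characterization itself. For sufficiency I would argue that if $\Omega$ is an equilibrium stress matrix for $p$ of rank $n-d-1$, then it is also an equilibrium stress for any equivalent $q$; the rank bound forces $q = Ap$ for a single affine map $A$, after which equality of edge lengths together with genericity pins $A$ down to a Euclidean isometry, giving congruence. The hard direction is necessity: showing that generic global rigidity \emph{forces} a stress matrix of rank $n-d-1$. Here I would follow the Gortler--Healy--Thurston analysis of the measurement variety, identifying equilibrium stresses with normals to this variety and the stress matrix with its second-order (curvature) data at $p$, so that a rank-deficient generic stress matrix would produce a genuine non-congruent equivalent framework on a nearby fiber, contradicting global rigidity. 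I expect this necessity step---converting the deficiency of a second-order, \emph{infinitesimal} invariant into the existence of an exact non-congruent equivalent configuration---to be the main obstacle, since it is exactly where the global rather than merely local nature of the hypothesis must be exploited.
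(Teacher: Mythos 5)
The paper does not prove this statement: it is imported verbatim as a cited result of Connelly \cite{c2} and Gortler--Healy--Thurston \cite{ght}, and nothing in the paper's own development depends on reproving it. So there is no internal proof to compare against; the right benchmark is the cited literature, and your outline does track it. The reduction is correct and well-motivated: global rigidity is not visibly an algebraic condition on $p$, so one replaces it by the stress-matrix criterion (existence of an equilibrium stress whose stress matrix has rank $n-d-1$), observes that the maximal stress-matrix rank is a generically constant function of the configuration, and concludes that the criterion holds at all generic configurations or at none. Your sufficiency sketch is essentially Connelly's argument, though you should make explicit the step you compress into ``genericity pins $A$ down'': after the rank condition forces $q=Ap$, one must rule out non-isometric affine maps by showing the edge directions of a generic framework carrying a max-rank stress do not lie on a conic at infinity.

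The genuine gap is that the necessity direction --- the entire content of \cite{ght} --- is present only as a pointer. The heuristic you give (stresses as normals to the measurement variety, the stress matrix as second-order data, rank deficiency producing a non-congruent framework ``on a nearby fiber'') names the right objects but is not how the argument actually closes: Gortler, Healy, and Thurston do not perturb to a nearby fiber, they analyze the fiber over the \emph{same} point of the measurement set, using the shared stress kernel and a degree/finiteness argument for the measurement map on an appropriate quotient to show that rank deficiency forces the generic fiber to contain non-congruent configurations. Converting ``deficient second-order infinitesimal data'' into ``an exact equivalent, non-congruent realization'' is precisely the hard step, and as written your proposal assumes it rather than proves it. As a roadmap the proposal is sound and consistent with the sources the paper cites; as a proof it establishes only the easy half (sufficiency plus the genericity bookkeeping).
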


For non-generic frameworks, there are problems like all points lying on a hyperplane that might yield unexpected rigidity properties. Thus, we consider only generic configurations because we can give characterizations of rigidity based on the underlying graph alone. An example of such a characterization comes from Hendrickson \cite{bh}, who found necessary conditions for a graph to be generically globally rigid and conjectured that they were also sufficient.

\begin{thm}[Hendrickson \cite{bh}]
\label{hend}
If a non-complete graph $G$ is generically globally rigid in $\R^d$, then it is $(d+1)$-connected and generically redundantly rigid.
\end{thm}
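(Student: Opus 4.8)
The plan is to establish the two necessary conditions separately, in each case assuming $G$ is generically globally rigid and converting a violation of the condition into a pair of equivalent but non-congruent frameworks on a generic configuration, contradicting global rigidity. Throughout I will use that generic global rigidity implies generic local rigidity: if a generic framework admitted a nontrivial flex, the nearby equivalent configurations would be non-congruent. I will also use that we may assume $G-e$ is connected for each edge $e$ under consideration, since otherwise $e$ is a bridge and $G$ itself is already flexible (the two sides of $e$ can pivot relative to one another), contradicting local rigidity.

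For $(d+1)$-connectivity, suppose instead that $G$ has a vertex cut $S$ with $|S|\le d$ separating $G$ into components, and let $H$ be one of them. For generic $p$, the at most $d$ points $p(S)$ span an affine subspace of dimension at most $d-1$, hence lie in some hyperplane $\Pi$. Let $\rho$ be the reflection of $\R^d$ across $\Pi$ and define $q$ to equal $\rho\circ p$ on $H$ and $p$ elsewhere. Then $(G,q)$ is equivalent to $(G,p)$: edges inside $H$ and inside $V\setminus H$ keep their lengths, edges joining $H$ to $S$ are preserved because their $S$-endpoints are fixed by $\rho$, and $S$ leaves no edges between $H$ and the other components. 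The frameworks are non-congruent for generic $p$, since a congruence would have to preserve every pairwise distance, yet reflecting $H$ changes the distance from some vertex of $H$ to some vertex outside $H$; this fails only if the configuration is symmetric across $\Pi$, which a generic configuration is not.

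For redundant rigidity, suppose some edge $e=v_iv_j$ has $G-e$ generically flexible, and fix a generic $p$. Since $G-e$ is connected, every configuration equivalent to $p$ for $G-e$ has bounded diameter, so the space $\overline{\mathcal{C}}$ of such configurations modulo Euclidean motions is a compact real algebraic set. Because $G$ is locally rigid while $G-e$ is flexible, removing $e$ creates exactly one internal degree of freedom (removal lowers the count by at most one, and $G$ has none), so the component of $\overline{\mathcal{C}}$ through $[p]$ is one-dimensional, i.e. a loop. Consider $g([q])=\|q_i-q_j\|^2$, the squared length of the missing edge, which is well defined on $\overline{\mathcal{C}}$; it is non-constant on the loop, since a constant value would turn the flex of $G-e$ into a nontrivial flex of $G$.

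The crux is then to produce a second framework of $G$ equivalent to $(G,p)$ but not congruent, and here compactness is what I expect to do the real work. On the loop, $g$ is continuous and non-constant, so it attains a maximum and a minimum and takes every value strictly between at least twice. For generic $p$ the value $g([p])$ is a regular value that is neither extreme along its loop, since sitting at an extremum, i.e. having $\tfrac{d}{dt}\|p_i-p_j\|^2=0$ along the flex, is a positive-codimension condition on $p$. Hence $g([p])$ is attained at a second point $[p']\neq[p]$, and $(G,p')$ matches $(G,p)$ on every edge of $G-e$ as well as on $e$, giving an equivalent, non-congruent framework. The hard part will be precisely this last step: verifying that the flex space is genuinely compact (which is why connectivity of $G-e$ is invoked) and that a generic configuration does not land at an extremal value of the missing-edge length along its own flex, so that the intermediate-value argument on the loop really returns a distinct equivalent configuration.
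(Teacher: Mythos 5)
Your overall strategy is the same one the paper sketches (and the one Hendrickson actually uses): a reflection across the hyperplane through a small separating set for $(d+1)$-connectivity, and a ``delete a non-redundant edge, flex, and reattach'' argument for redundant rigidity. The connectivity half of your argument is complete and correct, and your observation that the derivative of $\|q_i-q_j\|^2$ along the flex cannot vanish at a generic $p$ is exactly right (more than ``positive codimension'': such a vanishing would make the flex of $G-e$ an infinitesimal flex of $G$, contradicting the infinitesimal rigidity of a generic framework of a GLR graph).

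The genuine gap is in the step ``the component of $\overline{\mathcal{C}}$ through $[p]$ is one-dimensional, i.e.\ a loop.'' The rank computation you give ($G$ has full rank $vd-\binom{d+1}{2}$ at $p$, deleting a row drops it by at most one) only controls the rigidity matrix \emph{at $p$ itself}; it shows the fiber of $f_{G-e}$ is a smooth $1$-manifold \emph{near} $[p]$. To conclude that the whole connected component is a compact $1$-manifold without boundary --- which is what the intermediate-value argument on ``the loop'' needs --- you must know that \emph{every} configuration in that fiber is a regular point of $f_{G-e}$. If the fiber meets the critical set, the component could be a singular curve or behave like a manifold with boundary, and then removing a neighborhood of $[p]$ may disconnect it, so $g$ need not reattain the value $g([p])$ at a second point. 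This is the technical heart of Hendrickson's proof: one shows that the image of the critical set of $f_{G-e}$ is a semialgebraic set of dimension strictly less than the rank, so a generic $p$ (whose image has full transcendence degree) cannot map into it, and hence the entire fiber through a generic configuration avoids the critical locus. This is precisely what the paper is alluding to with the remark that the bad frameworks ``lie on critical points of a manifold'' and are therefore non-generic. Your proposal flags compactness and non-extremality as the hard parts, but the regularity of the whole fiber is the missing ingredient that makes the loop structure, and hence the final intermediate-value step, legitimate.
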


A graph that is GGR requires $(d+1)$-connectivity because if the graph could be disconnected into two components by deleting $d$ vertices, reflecting one component across the hyperplane defined by those $d$ points yields an equivalent, but not congruent framework. A graph that is GGR requires redundant rigidity because otherwise, we can delete some non-redundant edge, flex the graph, and replace the edge with the same length to get a non-congruent framework. This is impossible for some frameworks, but Hendrickson demonstrates that they are not generic since they lie on critical points of a manifold.

Connelly \cite{cn} and Frank and Jiang \cite{fj} found families of counterexamples to Hendrickson's conjecture. Such a counterexample is said to be \emph{generically almost-globally rigid}\footnote{Frank and Jiang \cite{fj} refer to these graphs as \emph{generically partially rigid}.}. We will generalize these results in the remainder of this paper.

\section{Infinitesimal Flexes and Equilibrium Stresses}
Let $f_G: R^{vd} \to R^{e}$ be a mapping where we take the coordinates of the configuration and output the edge-length squared of each edge. That is, $f_{G}(p=(p_1,p_2,...,p_v)) = (...,||p_i-p_j||^2,...)$. The \emph{rigidity matrix} of a framework is the Jacobian $df_G(p)$ and has dimensions $e \times vd$. For example, the rigidity matrix for the graph $K_3$ with coordinates $p_1 = (0,2), p_2 = (2, -2), p_3 = (1, 3)$ could be written as

$$2*\bordermatrix{         & p_{1,x} & p_{1,y} & p_{2,x} & p_{2,y} & p_{3,x} & p_{3,y}  \cr
                    v_1v_2 & -2      & 4       & 2       & -4      & 0       & 0        \cr
                    v_1v_3 & -1      & -1      & 0       & 0       & 1       & 1        \cr
                    v_2v_3 & 0       & 0       & 1       & -5      & -1      & 5        }.$$
                    
Since all possible $f_G$ are permutations of each other, the rigidity matrix is unique up to row permutations. An \emph{infinitesimal motion} is an element of the kernel of $df_G(p)$. Equivalently, an infinitesimal motion $p' = (p_1', p_2',..., p_v')$ satisfies, for any edge $v_iv_j$, $(p_i - p_j) \cdot (p_i' - p_j') = 0$. Infinitesimal motions generalize the notion of Euclidean motions and local flexes. To see this, consider the time derivative of $f_G$. Since Euclidean motions and local flexes preserve edge lengths, we wish to have, for any edge $v_iv_j$,

\begin{align*}
\frac{d}{dt} (p_i-p_j)^2 &= \frac{d}{dt} (p_i-p_j)\cdot (p_i-p_j)\\
                         &= 2(p_i - p_j) \cdot \frac{d}{dt} (p_i-p_j) \\
                         &= 2[(p_i-p_j) \cdot (p_i'-p_j')],
\end{align*}

which is zero if it is an infinitesimal motion. Any infinitesimal motion that is not a Euclidean motion is an \emph{infinitesimal flex}. A graph with no infinitesimal flex is \emph{infinitesimally rigid}.  Using the equivalence of the two definitions of an infinitesimal flex, Asimow and Roth \cite{ar} proved the following theorems that demonstrate the connection between the local rigidity and the rigidity matrix.

\begin{thm}[Asimow and Roth \cite{ar}]
A framework $(G,p)$ is infinitesimally rigid if and only if the rank of its rigidity matrix is $vd-\binom{d+1}{2}$, or equivalently, if the nullity is $\binom{d+1}{2}$. 
\end{thm}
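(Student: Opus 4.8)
The plan is to characterize the kernel of the rigidity matrix $df_G(p)$ by separating the always-present \emph{trivial} infinitesimal motions coming from Euclidean motions from everything else, and then to observe that infinitesimal rigidity is precisely the condition that nothing else survives. Since infinitesimal rigidity means the only infinitesimal motions are derivatives of Euclidean motions, the theorem reduces to showing that the space of such trivial motions is a subspace of $\ker df_G(p)$ of dimension exactly $\binom{d+1}{2}$; the rank statement then follows immediately from rank-nullity, as $\operatorname{nullity}(df_G(p)) = vd - \operatorname{rank}(df_G(p))$.

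First I would describe the trivial motions explicitly. An infinitesimal Euclidean motion assigns to each vertex the velocity $p_i' = A p_i + b$, where $A$ is a $d \times d$ skew-symmetric matrix (the infinitesimal rotation) and $b \in \R^d$ (the infinitesimal translation). I would verify that every such assignment lies in the kernel: for any pair $v_i, v_j$ we have $p_i' - p_j' = A(p_i - p_j)$, and since $A$ is skew-symmetric, $(p_i - p_j) \cdot A(p_i - p_j) = 0$. Thus the defining equation $(p_i - p_j)\cdot(p_i' - p_j') = 0$ holds for every edge, so these motions always lie in $\ker df_G(p)$ regardless of the graph.

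Next I would pin down the dimension of this space. The skew-symmetric matrices form a space of dimension $\binom{d}{2}$ and the translations contribute $\R^d$ of dimension $d$, for a total of $\binom{d}{2} + d = \binom{d+1}{2}$ parameters. To see that the map $(A,b) \mapsto (A p_i + b)_i$ is injective---so that the dimension really is $\binom{d+1}{2}$---I would use the standing assumption that the configuration does not lie on a hyperplane. If $A p_i + b = 0$ for all $i$, then taking differences gives $A(p_i - p_j) = 0$; because the points affinely span $\R^d$, the difference vectors $p_i - p_j$ span $\R^d$, forcing $A = 0$ and then $b = 0$.

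Finally, I would assemble the equivalence. By definition, an infinitesimal flex is an infinitesimal motion that is not a Euclidean motion, so $(G,p)$ is infinitesimally rigid exactly when $\ker df_G(p)$ coincides with the space of trivial motions. Since that space always sits inside the kernel and has dimension $\binom{d+1}{2}$, infinitesimal rigidity is equivalent to $\operatorname{nullity}(df_G(p)) = \binom{d+1}{2}$, which by rank-nullity is equivalent to $\operatorname{rank}(df_G(p)) = vd - \binom{d+1}{2}$. I expect the main obstacle to be the injectivity step: one must invoke the non-hyperplane hypothesis carefully, since on a configuration confined to a proper affine subspace the trivial motions degenerate and the count $\binom{d+1}{2}$ fails.
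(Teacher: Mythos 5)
Your proposal is correct and follows exactly the line the paper indicates: the paper does not prove this theorem (it is cited to Asimow and Roth) but justifies it with the single remark that the Euclidean motions always form a $\binom{d+1}{2}$-dimensional subspace of the kernel, which is precisely the fact you establish in detail via the skew-symmetric-plus-translation parametrization and the non-hyperplane hypothesis. Your injectivity step and the rank--nullity conclusion are both sound, so this is a complete version of the same argument.
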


Since the Euclidean motions are infinitesimal motions and have dimension $\binom{d+1}{2}$, $\binom{d+1}{2}$ is the smallest possible dimension for the kernel, which is the best possible. 

\begin{thm}[Asimow and Roth \cite{ar}]
\label{inf}
A graph with at least $d+1$ vertices is generically locally rigid if and only if a generic framework of it is infinitesimally rigid.
\end{thm}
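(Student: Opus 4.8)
The plan is to connect the two notions through the edge-length map $f_G \colon \R^{vd} \to \R^e$ and the behavior of the rank of its Jacobian $df_G(p)$ under genericity. One direction is soft. If a generic framework $(G,p)$ is infinitesimally rigid, then by the rank characterization of infinitesimal rigidity stated just above, $df_G(p)$ has rank $vd - \binom{d+1}{2}$, so the implicit function theorem shows that the fiber $f_G^{-1}(f_G(p))$ is, near $p$, a smooth manifold of dimension exactly $\binom{d+1}{2}$. Since the orbit of $p$ under the Euclidean motions is already a $\binom{d+1}{2}$-dimensional submanifold of this fiber---here the hypothesis of at least $d+1$ vertices is used, together with the standing assumption that the points do not all lie on a hyperplane, so that the orbit attains full dimension---the fiber coincides locally with the orbit. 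Thus there is no curve of equivalent configurations escaping the Euclidean motions, so $(G,p)$ is locally rigid and $G$ is generically locally rigid.

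For the converse I would argue the contrapositive: if a generic framework is not infinitesimally rigid, then a generic framework is locally flexible. The key observation is that every entry of $df_G(p)$ is a polynomial in the coordinates of $p$, so the rank of $df_G(p)$ attains its maximum value $r$ on a Zariski-open, hence dense, set of configurations, and a generic configuration lies in this set. If the generic framework is not infinitesimally rigid, then $r < vd - \binom{d+1}{2}$ by the rank characterization. The crux is that at a generic $p$ the rank is not merely maximal but locally constant: some $r \times r$ minor is nonzero at $p$ and therefore on a whole neighborhood, while no larger minor can be nonzero anywhere. Constant rank lets me invoke the constant rank theorem, which shows that $f_G^{-1}(f_G(p))$ is a smooth manifold of dimension $vd - r > \binom{d+1}{2}$ near $p$. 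As the Euclidean orbit of $p$ is only $\binom{d+1}{2}$-dimensional, the fiber strictly contains it, so I can choose a smooth path in the fiber through $p$ transverse to the orbit; this is precisely a curve of equivalent, non-congruent configurations, i.e.\ a local flex. Hence $G$ is not generically locally rigid.

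The main obstacle is exactly this last point: infinitesimal flexibility at a single configuration does not by itself imply a finite flex, since ``shaky'' frameworks can be infinitesimally flexible yet locally rigid. What rescues the argument is genericity, which upgrades maximality of the rank to constancy of the rank on a neighborhood---the hypothesis under which an infinitesimal flex integrates to an honest motion. In writing the proof I would state the constant rank theorem in the form needed, and carefully verify the dimension count $vd - r > \binom{d+1}{2}$ against the dimension $\binom{d+1}{2}$ of the Euclidean motions, a comparison that again relies on the bound of $d+1$ vertices and the non-degeneracy of the generic configuration.
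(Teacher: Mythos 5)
The paper does not prove this theorem --- it is quoted from Asimow and Roth \cite{ar} without proof --- and your argument is a correct reconstruction of their classical proof: genericity forces the rank of $df_G(p)$ to be maximal and hence locally constant, the constant rank theorem identifies the fiber $f_G^{-1}(f_G(p))$ locally as a manifold of dimension $vd-r$, and comparison with the $\binom{d+1}{2}$-dimensional Euclidean orbit settles both directions. The only loose phrase is the appeal to the implicit function theorem in the forward direction, where $df_G(p)$ need not be surjective; but since rank $vd-\binom{d+1}{2}$ is already the global maximum, the rank is locally constant there too and the same constant rank argument applies, so the gap is cosmetic.
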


\begin{prop}
\label{addedge}
If $G=(V,E)$ is a graph that is generically locally rigid, then adding an edge yields a generically locally rigid graph.
\end{prop}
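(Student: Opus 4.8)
The plan is to reduce everything to a rank computation for the rigidity matrix and invoke the Asimow--Roth criterion (Theorem \ref{inf}). Since $G$ is generically locally rigid and shares its vertex set with the new graph (so both have at least $d+1$ vertices), Theorem \ref{inf} tells us that a generic framework $(G,p)$ is infinitesimally rigid; by the rank criterion this means $\mathrm{rank}\,df_G(p) = vd - \binom{d+1}{2}$, where $v = |V|$.

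Next I would observe that genericity is a property of the configuration $p$ alone --- algebraic independence of its coordinates over $\Z$ --- and makes no reference to the edge set. Hence the same $p$ gives a generic framework $(G',p)$ of $G' = (V, E \cup \{e\})$, where $e$ is the added edge. The map $f_{G'}$ is simply $f_G$ augmented by one additional coordinate, namely the squared length of $e$, so its Jacobian $df_{G'}(p)$ is obtained from $df_G(p)$ by appending a single row. Appending a row can only increase the rank, giving $\mathrm{rank}\,df_{G'}(p) \geq vd - \binom{d+1}{2}$.

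For the reverse inequality, recall that the Euclidean motions are infinitesimal motions of any framework regardless of its edges, so they lie in the kernel of $df_{G'}(p)$; since they span a space of dimension $\binom{d+1}{2}$, the nullity of $df_{G'}(p)$ is at least $\binom{d+1}{2}$ and hence $\mathrm{rank}\,df_{G'}(p) \leq vd - \binom{d+1}{2}$. Combining the two bounds forces equality, so $(G',p)$ is infinitesimally rigid, and Theorem \ref{inf} then yields that $G'$ is generically locally rigid.

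The argument is short, and the only point that requires care --- which I would emphasize --- is the observation in the second paragraph that genericity transfers from $(G,p)$ to $(G',p)$ because it depends only on $p$ and not on which edges are present. Everything else is the elementary linear-algebra fact that appending a row cannot lower the rank, together with the upper bound coming from the universal presence of the $\binom{d+1}{2}$-dimensional space of Euclidean motions in the kernel.
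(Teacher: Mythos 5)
Your proof is correct and follows essentially the same route as the paper: append a row to the rigidity matrix, note the rank cannot decrease, and conclude it stays at $vd-\binom{d+1}{2}$. You simply make explicit two points the paper leaves implicit --- that genericity of $p$ is independent of the edge set, and that the Euclidean motions bound the rank from above.
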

\begin{proof}
Adding a row to the rigidity matrix cannot decrease the rank, but since the rank is already $vd-\binom{d+1}{2}$, the resulting graph's rank is also $vd-\binom{d+1}{2}$. 
\end{proof}

\begin{prop}
\label{lat}
Given a graph $G$ that is generically locally rigid in $\R^d$, adding a vertex $v$ to $G$ and at least $d$ edges connected to that vertex yields a generically locally rigid graph.
\end{prop}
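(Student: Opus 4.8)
The plan is to reduce to the extremal case and then read off the rank of the new rigidity matrix from a block decomposition. By Proposition \ref{addedge}, attaching an edge never decreases the rank of the rigidity matrix, and the rank can never exceed the value needed for infinitesimal rigidity (Theorem \ref{inf}), since the Euclidean motions always sit in the kernel. Hence once the rank of the rigidity matrix of the enlarged graph attains that maximal value, attaching further edges leaves it there. It therefore suffices to prove the statement when \emph{exactly} $d$ edges are attached to the new vertex.

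So let $G$ have $v$ vertices, let $w$ be the new vertex, and suppose $w$ is joined to exactly $d$ vertices $u_1,\dots,u_d$ of $G$, forming $G'$. First I would fix a generic configuration $p$ of $G'$ in $\R^d$; its restriction to the original vertices is again generic, so by Theorem \ref{inf} the matrix $df_G(p)$ has rank $vd-\binom{d+1}{2}$. Next I would order the columns so that the $d$ coordinates of $w$ come last and order the rows so that the $d$ new edges $wu_i$ come last, which puts the rigidity matrix of $G'$ into the block form
\[
df_{G'}(p)=\begin{pmatrix} df_G(p) & 0 \\ C & B \end{pmatrix},
\]
where the upper-right zero block records that no original edge involves $w$, the block $B$ is the $d\times d$ matrix whose $i$-th row is $p_w-p_{u_i}$, and $C$ collects the remaining entries of the new rows.

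The key step is that $B$ is invertible at a generic configuration: its rows are the difference vectors $p_w-p_{u_i}$, which are linearly independent because a generic set of $d+1$ points in $\R^d$ is affinely independent. Since $B$ has rank $d$, its columns can be used via column operations to clear the block $C$ without changing the rank, after which the matrix is block-diagonal with diagonal blocks $df_G(p)$ and $B$. Therefore
\[
\operatorname{rank} df_{G'}(p)=\operatorname{rank} df_G(p)+\operatorname{rank} B=\left(vd-\binom{d+1}{2}\right)+d=(v+1)d-\binom{d+1}{2}.
\]
This is exactly the rank required by Theorem \ref{inf} for a framework on $v+1$ vertices to be infinitesimally rigid, so $G'$ is generically locally rigid, and Proposition \ref{addedge} extends the conclusion to the case of more than $d$ attached edges.

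The one place the argument genuinely uses genericity, and the only real obstacle, is verifying that $B$ has full rank, i.e. that the difference vectors $p_w-p_{u_i}$ are independent. For special placements of $w$ and its neighbors (for instance, if they were constrained to a hyperplane) these vectors could be dependent and the rank count would collapse; I expect this to follow immediately from affine independence of a generic point set, but it is worth isolating as the step that distinguishes the generic case from an arbitrary framework.
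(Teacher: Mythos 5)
Your proposal is correct and follows essentially the same route as the paper: reduce to exactly $d$ new edges via Proposition \ref{addedge}, observe the rigidity matrix becomes block triangular with a new $d\times d$ block, and argue that block is generically nonsingular so the ranks add. Your explicit identification of the rows of $B$ as the difference vectors $p_w-p_{u_i}$ and the appeal to affine independence of generic points is just a more detailed justification of the paper's statement that the subdeterminant is a nonvanishing polynomial in the coordinates.
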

\begin{proof}
We only need to consider the case where we add $d$ edges, since adding more follows from Proposition \ref{addedge}. Consider the rigidity matrix of $G$. Adding $v$ increases both the column and row size by $d$. The resulting matrix is block triangular, so consider the submatrix formed by the newly added rows and columns. The determinant of the submatrix forms an algebraic equation in the coordinates and hence must be non-zero for a generic placement of the new vertex. Thus, the submatrix is of maximal rank and the resulting graph is also GLR. 
\end{proof}

An \emph{(equilibrium) stress} is a vector $\omega = (...,\omega_{ij},...) \in \R^e$ such that for all vertices $v_i \in V$,

\[\displaystyle\sum_{j\mid v_iv_j\in E} \omega_{ij}(p_i-p_j) = 0.\]

By multiplying out $(df_G)^T \omega$, we find that this definition is equivalent to saying that $\omega \in \ker(df_G)^T$. We denote the space of stresses as $\Omega(G,p)$. From these definitions, $\dim \ker df_G(p) = vd - e + \dim \Omega(G,p)$ by a matrix dimension argument. This yields a crucial characterization of redundant edges.

\begin{prop}[Frank and Jiang \cite{fj}]
\label{red}
Removing an edge $e$ of a generically locally rigid graph $G$ preserves local rigidity if and only if for any $r \in \R$, there exists a stress with value $r$ on $e$.
\end{prop}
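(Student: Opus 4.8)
The plan is to translate both sides of the biconditional into statements about the rigidity matrix at a single generic configuration $p$, and then to recognize the ``stress values on $e$'' condition as the surjectivity of a certain linear map. First I would fix a generic $p$ and invoke Asimow--Roth (Theorem \ref{inf} together with the rank criterion): since $G$ is generically locally rigid, $df_G(p)$ has nullity exactly $\binom{d+1}{2}$, and removing $e$ preserves local rigidity precisely when $df_{G-e}(p)$ also has nullity $\binom{d+1}{2}$. Because the rank of a matrix of polynomials attains its generic value on a Zariski-dense set, a single generic $p$ is simultaneously generic for $G$ and for $G-e$, so it suffices to do all of the bookkeeping at this one $p$.

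Next I would apply the dimension identity $\dim\ker df_H(p) = vd - |E_H| + \dim\Omega(H,p)$ to both $H = G$ and $H = G-e$. Since $G-e$ has exactly one fewer edge than $G$, subtracting the two instances shows that $G-e$ is generically locally rigid if and only if $\dim\Omega(G-e,p) = \dim\Omega(G,p) - 1$. This reframes the question entirely in terms of how the stress space changes when $e$ is deleted. The key observation is then that $\Omega(G-e,p)$ is naturally identified with the subspace of $\Omega(G,p)$ consisting of stresses whose value on $e$ is $0$: a stress of $G-e$ is exactly a stress of $G$ with $\omega_e = 0$, since the equilibrium condition at every vertex is unaffected by adjoining a zero coordinate for $e$. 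In other words, $\Omega(G-e,p)$ is the kernel of the evaluation map $\mathrm{ev}_e \colon \Omega(G,p) \to \R$, $\omega \mapsto \omega_e$.

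By rank--nullity, $\dim\Omega(G,p) - \dim\Omega(G-e,p) = \dim\operatorname{im}(\mathrm{ev}_e)$, and this image, being a subspace of $\R$, is either $\{0\}$ or all of $\R$. Hence the dimension drops by exactly $1$ if and only if $\operatorname{im}(\mathrm{ev}_e) = \R$, and since $\Omega(G,p)$ is a vector space, surjectivity of $\mathrm{ev}_e$ is equivalent by scaling to the existence, for every $r \in \R$, of a stress taking value $r$ on $e$. Combining this with the previous step yields the proposition in both directions. I do not expect a deep obstacle here; the one point requiring care is genericity, namely that ``generically locally rigid'' is a statement about generic frameworks, so every nullity and stress-space dimension must be evaluated at the \emph{same} generic $p$ where the rigidity matrix realizes its generic rank. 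The identification $\Omega(G-e,p) \cong \ker(\mathrm{ev}_e)$ is the other step to state precisely, though it is immediate from the definition of a stress.
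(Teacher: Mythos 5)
Your proposal is correct and follows essentially the same route as the paper: both arguments rest on the identity $\dim\ker df_H(p) = vd - |E_H| + \dim\Omega(H,p)$ applied to $G$ and $G-\{e\}$, the identification of stresses of $G-\{e\}$ with stresses of $G$ vanishing on $e$, and the observation that scaling a stress with nonzero value on $e$ realizes every $r\in\R$. Your packaging via the evaluation map $\mathrm{ev}_e$ and rank--nullity is a slightly cleaner way to state what the paper does by hand, but it is the same argument.
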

\begin{proof}
Assume both $G$ and $G-\{e\}$ are GLR. Then the space of flexes for both graphs is $\binom{d+1}{2}$, so adding edge $e$ to $G-\{e\}$ increases $\dim \Omega(G,p)$. Thus, a stress with non-zero value on $e$ must exist. Conversely, deleting an edge with non-zero stress decreases the space of stresses by at least 1 because scaling that stress creates a one-dimensional subspace, so $\binom{d+1}{2} = \dim \ker df_G(p) \geq \dim \ker df_{G-\{e\}}(p)$. Because the Euclidean motions are infinitesimal flexes of all frameworks and have dimension $\binom{d+1}{2}$, we have equality.
\end{proof}

If we restrict ourselves to only balanced complete bipartite graphs, we obtain a tidy characterization of the stresses and flexes. The following theorem is a formula for the dimension of the stresses.

\begin{thm}[Bolker and Roth \cite{br}]
\label{brz}
Given some balanced complete bipartite graph $K_{a,b}$ where $a+b \leq \binom{d+2}{2}$, the space of stresses $\dim \Omega(K_{a,b})$ for a generic configuration has dimension $(a-d-1)(b-d-1)$.
\end{thm}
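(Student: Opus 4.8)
The plan is to compute the nullity of the rigidity matrix and read off $\dim\Omega(K_{a,b})$ from the identity $\dim\ker df_G(p) = vd - e + \dim\Omega(K_{a,b},p)$ established above. Write the images of the two bipartite classes as $x_1,\dots,x_a$ and $y_1,\dots,y_b$, so that $v=a+b$ and $e=ab$, and recall that an infinitesimal motion assigns velocities $x_i',y_j'\in\R^d$ with $(x_i-y_j)\cdot(x_i'-y_j')=0$ for every edge $x_iy_j$. Since $vd=(a+b)d$ and $e=ab$, the target $\dim\Omega=(a-d-1)(b-d-1)$ is equivalent to the claim that the space of infinitesimal motions has dimension exactly $(d+1)^2-(a+b)$, and this is what I would prove.

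The key construction produces infinitesimal motions from quadrics through all the vertices. Let $q(z)=z^{T}Sz+2u^{T}z+c$ be a quadratic polynomial with $S$ symmetric, so the quadrics form a space of dimension $\binom{d+2}{2}$. If $q$ vanishes at every $x_i$ and every $y_j$, then setting $x_i'=\tfrac12\nabla q(x_i)=Sx_i+u$ and $y_j'=-\tfrac12\nabla q(y_j)=-(Sy_j+u)$ gives an infinitesimal motion: expanding and using the symmetry of $S$ yields $(x_i-y_j)\cdot(x_i'-y_j')=q(x_i)-q(y_j)=0$. Because the configuration is generic and $a+b\le\binom{d+2}{2}$, the $a+b$ evaluation functionals $q\mapsto q(x_i)$ and $q\mapsto q(y_j)$ are linearly independent on the space of quadrics, so the quadrics vanishing at all vertices form a space of dimension exactly $\binom{d+2}{2}-(a+b)$. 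A short check—any cancellation between a trivial (Euclidean) motion and a quadric motion forces the skew part, the symmetric part, and the translation all to vanish, using that each class affinely spans $\R^d$ since $a,b\ge d+1$—shows that the $\binom{d+1}{2}$-dimensional space of trivial motions and the quadric motions are independent. This produces at least $\binom{d+1}{2}+\binom{d+2}{2}-(a+b)=(d+1)^2-(a+b)$ independent infinitesimal motions.

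The main obstacle is the reverse inequality: that \emph{every} infinitesimal motion is a sum of a trivial motion and a quadric motion, equivalently that $df_G(p)$ attains its expected maximal rank $(a+b)(d+1)-(d+1)^2$ at a generic configuration. I would establish this by genericity. The rank of $df_G(p)$ is lower semicontinuous in $p$, so it suffices to exhibit a single configuration whose infinitesimal motions can be enumerated completely and shown to be exactly the trivial-plus-quadric ones; then the generic rank is at least this value, while the construction of the previous paragraph shows it is at most this value. An alternative route is to observe that the image of the squared-edge-length map $f_G$ lands in the determinantal variety of matrices $M$ with $M_{ij}=s_i+t_j-2g_{ij}$ for some $s\in\R^a$, $t\in\R^b$ and some $g$ of rank at most $d$, and to identify $\Omega(K_{a,b},p)$ with the conormal space of this variety, whose codimension is $(a-d-1)(b-d-1)$. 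The delicate point in that approach is that $s_i=\|x_i\|^2$ and $t_j=\|y_j\|^2$ are not free parameters, so one must verify that the image of $f_G$ is dense in the variety and that $f_G(p)$ is a smooth point.

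Combining the two inequalities shows the infinitesimal motion space has dimension $(d+1)^2-(a+b)$, and the dimension identity then gives
\[
\dim\Omega(K_{a,b}) = ab-(a+b)d+\left[(d+1)^2-(a+b)\right] = (a-d-1)(b-d-1),
\]
as claimed. I expect the genericity/enumeration step to carry essentially all of the difficulty: the quadric construction supplies the conceptual content and the lower bound, while the matching upper bound is exactly where one must rule out unexpected flexes, and the remaining manipulations are bookkeeping with the dimension identity.
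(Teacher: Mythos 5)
First, a point of reference: the paper does not prove this statement at all --- it is quoted as a known theorem of Bolker and Roth \cite{br}, so there is no internal proof to compare your argument against. Judged on its own, your proposal correctly sets up the problem: the identity $\dim \ker df_G(p) = vd - e + \dim\Omega$ reduces the claim to showing the motion space has dimension $(d+1)^2-(a+b)$, the quadric-flex construction $x_i'\mapsto Sx_i+u$, $y_j'\mapsto -(Sy_j+u)$ is exactly the mechanism the paper itself describes around Proposition \ref{inout}, and your verification that $(x_i-y_j)\cdot(x_i'-y_j')=q(x_i)-q(y_j)$, the independence of the evaluation functionals at generic points, and the transversality with the Euclidean motions are all sound. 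This gives the correct lower bound $\dim\Omega \geq (a-d-1)(b-d-1)$.

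The genuine gap is the matching upper bound, and you have identified it yourself without closing it. Saying ``it suffices to exhibit a single configuration whose infinitesimal motions can be enumerated completely'' is a correct reduction via lower semicontinuity of rank, but no such configuration is exhibited and no enumeration is performed; the alternative determinantal-variety route is likewise left with its delicate point (density of the image of $f_G$ and smoothness of $f_G(p)$) unaddressed. This is not a routine genericity check that can be waved through: the entire content of the Bolker--Roth theorem is that bipartite rigidity matrices fail to reach the rank one would predict by edge-counting, precisely because of the quadric through the vertices, and ruling out \emph{further} unexpected degeneracy requires their structural analysis of the stress space (essentially, identifying $\Omega(K_{a,b})$ with a tensor product of affine-dependency spaces). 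As written, your argument proves only one inequality; the other --- the half that makes the theorem a theorem --- is deferred to a step you have named but not carried out.
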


\begin{cor}
\label{bir}
If $K_{a,b}$ is generically locally rigid and $a,b \geq d+2$, then $K_{a,b}$ is generically redundantly rigid.
\end{cor}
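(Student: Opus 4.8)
The plan is to use Proposition \ref{red} to convert the statement about redundant rigidity into a statement about equilibrium stresses, and then to apply the Bolker--Roth dimension formula of Theorem \ref{brz} to guarantee that enough stresses exist. Recall that by Proposition \ref{red}, removing an edge $e$ from a generically locally rigid graph preserves local rigidity if and only if, for every $r \in \R$, there is a stress taking value $r$ on $e$. Since the space of stresses $\Omega(K_{a,b})$ is a linear subspace of $\R^e$, the existence of a stress with arbitrary prescribed value on $e$ is equivalent to the existence of a single stress whose value on $e$ is nonzero (one then scales). So the goal reduces to showing that for each edge $e$, there is some equilibrium stress nonzero on $e$.

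First I would invoke the hypothesis $a, b \geq d+2$ together with Theorem \ref{brz}. The dimension formula gives $\dim \Omega(K_{a,b}) = (a-d-1)(b-d-1)$, which under $a,b \geq d+2$ is at least $1 \cdot 1 = 1$, so nontrivial stresses exist; more importantly, I want this count to hold for each edge. The cleanest route is an argument by symmetry and contradiction: suppose some edge $e = v_i v_j$ has $\omega_{ij} = 0$ for \emph{every} stress $\omega \in \Omega(K_{a,b})$. Then every stress on $K_{a,b}$ is also a stress on $K_{a,b} - \{e\}$, so $\dim \Omega(K_{a,b} - \{e\}) = \dim \Omega(K_{a,b})$. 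I would then compare this with the dimension predicted for the graph obtained by deleting $e$ and derive a contradiction with the stress count, or equivalently show directly that deleting $e$ must strictly drop the stress dimension.

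The main obstacle is that Theorem \ref{brz} as stated applies to complete bipartite graphs, whereas $K_{a,b} - \{e\}$ is no longer complete bipartite, so I cannot directly read off $\dim \Omega(K_{a,b} - \{e\})$ from the same formula. To circumvent this I would argue via the rank-nullity relation $\dim \ker df_G(p) = vd - e + \dim \Omega(G,p)$ established in the text. Since $K_{a,b}$ is assumed generically locally rigid and $a,b \geq d+2$, the relation pins down $\dim \Omega(K_{a,b}) = e - vd + \binom{d+1}{2} = ab - (a+b)d + \binom{d+1}{2}$, and one checks this is consistent with the Bolker--Roth value $(a-d-1)(b-d-1)$ precisely when the configuration is such that the bipartite framework attains the generic stress dimension. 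The delicate point, then, is verifying that no single edge can be ``stress-free'' across all of $\Omega(K_{a,b})$: because the complete bipartite graph is edge-transitive within each bipartite class and the generic stress space is defined by algebraically independent coordinates, any edge playing a distinguished role would force an algebraic dependence, contradicting genericity. I expect the cleanest writeup simply to apply Proposition \ref{red} edge by edge, using Theorem \ref{brz} to certify that removing any single edge leaves the stress dimension positive and hence the graph locally rigid, thereby exhibiting a nonzero stress on every edge and concluding that $K_{a,b}$ is generically redundantly rigid.
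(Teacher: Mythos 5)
Your proposal is correct and follows essentially the same route as the paper: Theorem \ref{brz} with $a,b\geq d+2$ gives $\dim\Omega(K_{a,b})=(a-d-1)(b-d-1)\geq 1$, so some stress is nonzero on some edge, Proposition \ref{red} makes that edge redundant, and edge-transitivity of $K_{a,b}$ extends this to every edge. The only blemish is your closing sentence, which suggests applying Theorem \ref{brz} to $K_{a,b}-\{e\}$ (not a complete bipartite graph, so the formula does not apply); the symmetry argument you gave earlier is the right way to finish, and is exactly what the paper does.
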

\begin{proof}
Since $\dim \Omega(K_{a,b}) > 0$, there must exist a stress which is non-zero on some edge. That edge is then redundant by Proposition \ref{red}, so by symmetry, all the edges of $K_{a,b}$ are redundant.
\end{proof}
Whiteley \cite{wh} explicitly describes the infinitesimal flexes that arise from the stresses of a complete bipartite framework. When $v < \binom{d+2}{2}$, there exists at least one \emph{quadric surface} that passes through all $v$ points. A quadric surface is a $(d-1)$-dimensional surface in $\R^d$ whose space is the locus of zeroes of some quadratic polynomial in $d$ variables. That is, a quadric surface can be viewed as the set of all points $p=(p_1,p_2,...,p_d)$ that satisfy the equation

\[\displaystyle\sum^d_{i=1}A_ip_i^2 + \displaystyle\sum_{j=1}^d\sum_{k=j+1}^d 2B_{j,k}(p_jp_k) + \displaystyle\sum^d_{l=1}C_lp_l + D = 0\]

for some real coefficients $A_i$, $B_{j,k}$, $C_l$, $D$ not all zero. A quadric surface can also be defined as the set $\{p \in \R^d \mid (p,1)^TQ(p,1) = 0\}$ for some symmetric $(d+1)\times(d+1)$ matrix. To see that this definition is equivalent to the polynomial equation counterpart, we let $A_i := [Q]_{i,i}$, $B_{j,k} := 2[Q]_{j,k}$, $C_l := 2[Q]_{d+1,l}$, $D := [Q]_{d+1,d+1}$. Expanding out $(p,1)^TQ(p,1)$ yields the polynomial equation definition.

\begin{figure}[ht]
\begin{tikzpicture}
[v/.style={circle,draw=black!100,fill=black!100,thick,inner sep=2pt},
 w/.style={circle,draw=black!100,fill=white!100,thick,inner sep=2pt}
]
\draw[draw=black,dashed,] (0,0) circle (1);
\foreach \i in {1,3,5}{
  \node (v\i) at ({cos(40*\i+2*\i*\i)},{sin(40*\i+2*\i*\i)})[v]{};
  \draw [->,blue,thick] (v\i) to ({cos(40*\i+2*\i*\i)*0.5},{sin(40*\i+2*\i*\i)*0.5});
}
\foreach \i in {2,4,6}{
  \node (v\i) at ({cos(40*\i+2*\i*\i)},{sin(40*\i+2*\i*\i)})[w]{};
  \draw [->,blue,thick] (v\i) to ({cos(40*\i+2*\i*\i)*1.5},{sin(40*\i+2*\i*\i)*1.5});
}
\foreach \i in {1,3,5}
  \foreach \j in {2,4,6}
    \draw [-] (v\i) to (v\j);
\end{tikzpicture}
\caption{$K_{3,3}$ on a circle and its corresponding quadric flex. The bipartite classes are shown in different colors. }
\label{fig-white}
\end{figure}
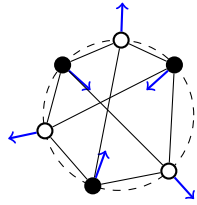

Let the \emph{quadric flex} be the flex $Qx_i$ for all vertices $x_i$ of one bipartite class, and $-Qy_i$ for all vertices $y_i$ of the other bipartite class. Intuitively, this flex pushes one bipartite class into the quadric surface and the other class outwards from the surface, as seen in Figure \ref{fig-white}. To see that this is in fact an infinitesimal flex, for any pair of adjacent vertices $v_a$, $v_b$,

\begin{align*}
(p_a-p_b)\cdot (Qp_a-(-Qp_b)) &= p_a\cdot Qp_a + p_b\cdot Qp_b + p_a\cdot Qp_b - p_b\cdot Qp_a \\
                              &= p_a\cdot Qp_b - p_b^TQp_a,
\end{align*}

but since $Q$ is symmetric, $p_a\cdot Qp_b = p_a^TQp_b = p_b^TQp_a = p_b\cdot Qp_a$, so $(p_a-p_b)\cdot (Qp_a-(-Qp_b)) = 0$.

\begin{prop}[Whiteley \cite{wh}]
\label{inout}
For generic $d$-dimensional frameworks of balanced complete bipartite graphs with fewer than $\binom{d+2}{2}$ vertices, the quadric flexes spans the space of all infinitesimal flexes modulo Euclidean motions.
\end{prop}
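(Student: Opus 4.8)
The plan is to exhibit the linear map $\Phi$ that sends a quadric through all $v=a+b$ points to its quadric flex, and to show that $\Phi$ descends to an isomorphism onto the space of infinitesimal flexes modulo Euclidean motions. Since the excerpt already verifies that each $\Phi(Q)$ is an infinitesimal motion, it suffices to check that the source and target have the same finite dimension and that $\Phi$ is injective modulo Euclidean motions; injectivity together with equal dimensions forces surjectivity, which is exactly the statement.

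First I would compute the dimension of the target. Combining the identity $\dim\ker df_G(p)=vd-e+\dim\Omega(G,p)$ with $e=ab$ and Bolker--Roth (Theorem \ref{brz}), which gives $\dim\Omega(K_{a,b})=(a-d-1)(b-d-1)$, a one-line expansion collapses everything to $\dim\ker df_G(p)=(d+1)^2-v$. Removing the $\binom{d+1}{2}$ dimensions of Euclidean motions leaves $(d+1)^2-\binom{d+1}{2}-v=\binom{d+2}{2}-v$ for the infinitesimal flexes modulo Euclidean motions.

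Next I would compute the dimension of the source. Writing a quadric as $\hat p^{\,T}Q\hat p$ with $\hat p=(p,1)$ and $Q$ a symmetric $(d+1)\times(d+1)$ matrix, the ambient space of such $Q$ has dimension $\binom{d+2}{2}$, and passing through $p_i$ imposes the single linear condition $\hat p_i^{\,T}Q\hat p_i=\langle Q,\hat p_i\hat p_i^{\,T}\rangle=0$. For a generic configuration these $v$ functionals are linearly independent whenever $v\le\binom{d+2}{2}$: the matrices $\hat p_i\hat p_i^{\,T}$ are independent because the degree-$\le 2$ monomials in the coordinates, evaluated at generic points, give a full-rank (Veronese/Vandermonde) evaluation matrix. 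Since the hypothesis guarantees $v<\binom{d+2}{2}$, the space of quadrics through the points has dimension $\binom{d+2}{2}-v$, matching the target.

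The hard part, and the heart of the argument, is injectivity modulo Euclidean motions: I must show that the only quadric through the points whose flex is a Euclidean motion is $Q=0$. Writing $Q=\left(\begin{smallmatrix}M&c\\ c^{T}&e\end{smallmatrix}\right)$, so that in the homogeneous coordinates used above the velocity assigned to a point $p$ is $\pm(Mp+c)$, if $\Phi(Q)$ equals the infinitesimal motion $p\mapsto Ap+b$ with $A$ skew-symmetric, then on the first bipartite class $(M-A)x_i=b-c$ and on the second $(M+A)y_j=-(b+c)$. Because the graph is balanced, each class contains $d+1$ affinely independent generic points, so each affine relation forces its linear and constant parts to vanish separately. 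The first gives $M=A$ with $M$ symmetric and $A$ skew-symmetric, whence $M=M^{T}=A^{T}=-A=-M$ and therefore $M=A=0$; the two constant equations $b=c$ and $b=-c$ then give $b=c=0$. With $M=0$ and $c=0$ the quadric condition collapses to $\hat p_i^{\,T}Q\hat p_i=e=0$, so $Q=0$. This shows $\ker\Phi=0$, and the dimension counts above upgrade this to the desired isomorphism. I expect the balancedness hypothesis to enter precisely here, through the need for $d+1$ affinely independent points in each class, with the symmetric-versus-skew-symmetric clash being the one genuinely non-formal step.
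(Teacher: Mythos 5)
Your proposal is correct and follows essentially the same route as the paper: both arguments rest on the dimension count $\dim\ker df_G(p)=\binom{d+1}{2}+\binom{d+2}{2}-v$ obtained from Bolker--Roth, matched against the $\bigl(\binom{d+2}{2}-v\bigr)$-dimensional space of quadrics through the $v$ points. The only difference is that you actually prove the two facts the paper asserts without argument --- the generic independence of the point constraints $\hat p_i^{\,T}Q\hat p_i=0$ and the injectivity of $Q\mapsto$ (quadric flex) modulo Euclidean motions, via the symmetric-versus-skew-symmetric clash using $d+1$ affinely independent points in each class (this is where balancedness enters) --- so your write-up is a strictly more complete version of the same argument.
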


To see why the quadric flexes are the only such flexes, consider the space of flexes for bipartite frameworks. We obtain

\begin{align*}
\dim \ker df_G(p) &= vd - e + \dim \Omega(G,p) \\
                  &= (a+b)d - ab + (a-d-1)(b-d-1) \\
                  &= \binom{d+1}{2} + \binom{d+2}{2}-a-b
\end{align*}

However, the space of quadric surfaces has dimension $\binom{d+2}{2}-a-b$, and since each quadric surface gives rise to an independent flex, they span all infinitesimal flexes modulo Euclidean motions. Because balanced bipartite graphs have this nice property, we will only consider balanced joined graphs for the rest of the paper.

\section{Quadric Rigidity Matrix}

Specifying the coordinates of a single point $p_i$ forces any quadric surface $Q$ containing $p_i$ to satisfy the linear constraint $p_i^T Qp_i = 0$. There is a similar constraint by adding extraneous edges to vertices within the same bipartite class.

\begin{prop}
Given a complete bipartite framework, if $x_i$ and $x_j$ are vertices in the same bipartite class, then adding the edge $x_ix_j$ imposes the linear constraint $p_i^T Qp_j = p_j^T Qp_i = 0$ on the space of quadric surfaces $Q$ whose quadric flex preserves the length of $x_ix_j$.
\end{prop}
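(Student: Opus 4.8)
The plan is to apply the infinitesimal flex condition for the single new edge $x_ix_j$ directly to the quadric flex, exactly as was done above in verifying that the quadric flex preserves the bipartite edges. The key structural point, and the whole source of the new constraint, is that $x_i$ and $x_j$ lie in the \emph{same} bipartite class. The quadric flex therefore assigns them velocities of the same sign, $Qp_i$ and $Qp_j$, rather than the opposite signs $Qp_a$ and $-Qp_b$ that occur for an edge between the two classes. First I would record this observation, and then impose the first-order edge-length condition.

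Concretely, the edge $x_ix_j$ is preserved to first order precisely when $(p_i-p_j)\cdot\bigl(Qp_i-Qp_j\bigr)=0$. Expanding this bilinear expression gives
\[
(p_i-p_j)\cdot(Qp_i-Qp_j) = p_i^TQp_i - p_i^TQp_j - p_j^TQp_i + p_j^TQp_j.
\]
Next I would invoke the standing assumption that the quadric $Q$ passes through every vertex of the configuration; in particular $x_i$ and $x_j$ lie on $Q$, so $p_i^TQp_i=0$ and $p_j^TQp_j=0$, and the two diagonal terms drop out. Finally, symmetry of $Q$ gives $p_i^TQp_j=p_j^TQp_i$, so the surviving terms combine to $-2\,p_i^TQp_j$. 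The edge is thus preserved if and only if $p_i^TQp_j=p_j^TQp_i=0$, which is a single linear condition on the entries of $Q$, as claimed.

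The computation itself is routine; the content I would most want to flag is the contrast with the cross-class case. There the opposite signs make the two cross terms cancel by symmetry, so the whole expression vanishes automatically once $Q$ passes through the points — which is exactly why the quadric flex is a flex of the bipartite edges to begin with. Here the matching signs instead make the \emph{diagonal} terms the ones that vanish for free, leaving the cross term $p_i^TQp_j$ as a genuine obstruction. Consequently the only real subtlety is bookkeeping the sign convention correctly and remembering to use that \emph{all} points, including $x_i$ and $x_j$ themselves, lie on the quadric; without that last fact the diagonal terms would survive and the condition would not collapse to the clean linear form $p_i^TQp_j=0$.
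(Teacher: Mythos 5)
Your proof is correct and follows essentially the same computation as the paper's: expand the first-order length condition $(p_i-p_j)\cdot(Qp_i-Qp_j)=0$ for a same-class edge and use the symmetry of $Q$ to reduce it to $p_i^TQp_j=p_j^TQp_i=0$. You are in fact slightly more careful than the paper, which drops the diagonal terms $p_i^TQp_i$ and $p_j^TQp_j$ without explicitly noting that they vanish because the points lie on the quadric.
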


\begin{proof}
We wish to find a quadric surface $Q$ such that its squadric flex satisfies the infinitesimal flex condition $(p_i-p_j)\cdot(p_i'-p_j') = 0$. Then

\begin{align*}
(p_i-p_j)\cdot (p_i'-p_j') &= (p_i-p_j) \cdot (Qp_i-Qp_j) \\
                           &= p_i\cdot Qp_i + p_j\cdot Qp_j - p_i\cdot Qp_j - p_j\cdot Qp_i \\
                           &= - p_i\cdot Qp_j - p_j\cdot Qp_i.
\end{align*}

However, since $Q$ is symmetric, $p_i\cdot Qp_j = p_i^TQp_j = p_j^TQp_i = p_j\cdot Qp_i$, so $p_i^T Qp_j = p_j^T Qp_i = 0$.
\end{proof}

To see that both the constraints from vertices and edges are in fact linear, we look at the polynomial form for a quadric. Suppose we have a configuration that maps vertices $x$ and $y$ to $p$ and $q$ in $\R^d$, respectively. For the vertex constraint of the vertex $x$, we obtain

\[\displaystyle\sum^d_{i=1}A_ip_i^2 + \displaystyle\sum_{j=1}^d\sum_{k=j+1}^d 2B_{j,k}(p_jp_k) + \displaystyle\sum^d_{l=1}2C_lp_l + 1 = 0.\]

Similarly, the edge constraint of the edge $xy$ yields

\[\displaystyle\sum^d_{i=1}A_i(p_iq_i) + \displaystyle\sum_{j=1}^d\sum_{k=j+1}^d B_{j,k}(p_jq_k+p_kq_j) + \displaystyle\sum^d_{l=1}C_l(p_l + q_l) + 1 = 0,\]

where $A_i, B_{j,k}$ and $C_l$ are variables representing the coefficients of the quadric polynomial. We define the \emph{(d-dimensional) constraint mapping} $m : \R^{2d} \to \R^{\binom{d+2}{2}}$ where $(p,q)$ is mapped to

\begin{align*}
(p_1q_1,\hspace{1pt}p_2q_2,\hspace{1pt}...\hspace{1pt},\hspace{1pt}p_dq_d,& \\
p_1q_2+p_2q_1,\hspace{1pt}&p_1q_3+p_3q_1,\hspace{1pt}...\hspace{1pt},\hspace{1pt}p_{d-1}q_d+p_dq_{d-1}, \\
&\hspace{50pt}p_1+q_1,\hspace{1pt}p_2+q_2,\hspace{1pt}...\hspace{1pt},\hspace{1pt}p_d+q_d, 1).
\end{align*}

These are the coefficients of the $A_i, B_i$, and $C_i$ variables in the edge constraint. For a single point, the constraint mapping is $m(p, p)$. Since the vertex and edge constraints form a system of linear equations, it is natural to define the following. 

\begin{defn}
Let $G+H$ be a joined graph with vertex set $V$ and extraneous edge set $E'$. The \emph{quadric rigidity matrix} (QRM) of $G+H$ is the $(|V|+|E'|)\times\binom{d+2}{2}$ matrix whose rows are the constraint mappings $m(v,v)$ for all $v \in V$ and $m(v_i,v_j)$ for all $v_iv_j \in E'$.
\end{defn}

Since the quadric flex automatically preserves non-extraneous edge lengths, those edges do not impose any constraint on the QRM. Suppose we have the joined graph $(K_2 \cup K_1) + E_3$ with configuration $p_1 = (4, -5), p_2 = (2,4), p_3 = (-1, 3), p_4 = (-4, -1), p_5 = (-9,0), p_6 = (5, 7)$ such that the extraneous edge connects $v_1$ and $v_2$. We can write the QRM of this joined framework as

\[\bordermatrix{           & x^2     & y^2     & xy      & x       & y       & 1        \cr
                       v_1 & 16      & 25      & -40     & 8       & -10     & 1        \cr
                       v_2 & 4       & 16      & 16      & 4       & 8       & 1        \cr
                       v_3 & 1       & 9       & -6      & -2      & 6       & 1        \cr
                       v_4 & 16      & 1       & 8       & -8      & -2      & 1        \cr
                       v_5 & 81      & 0       & 0       & -18     & 0       & 1        \cr
                       v_6 & 25      & 49      & 24      & 10      & 14      & 1        \cr
                    v_1v_2 & 8       & -20     & 6       & 6       & -1      & 1        }.\] 

The following results are crucial for the main results of this paper.

\begin{prop}
The quadric rigidity matrix of a joined framework $(G,p)$ has rank $\binom{d+2}{2}$ if and only if $(G,p)$ is infinitesimally rigid.
\end{prop}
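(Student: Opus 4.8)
The plan is to show that the null space of the quadric rigidity matrix is, via the quadric flex construction, naturally isomorphic to the space of infinitesimal flexes of $(G+H,p)$ modulo Euclidean motions. Since the QRM has $\binom{d+2}{2}$ columns, having rank $\binom{d+2}{2}$ is equivalent to having trivial null space, which will then be equivalent to infinitesimal rigidity. Write $a=|V_G|$ and $b=|V_H|$ (both $\geq d+1$ by balancedness), and let $N$ denote the null space of the QRM. A vector in $N$ is exactly the coefficient vector of a symmetric quadric $Q$ satisfying every vertex constraint $m(v,v)$ and every edge constraint $m(v_i,v_j)$; equivalently, $Q$ passes through all $v$ vertices and its quadric flex preserves the length of every extraneous edge.

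First I would define the linear map $\Phi$ sending such a $Q$ to its quadric flex and check that $\Phi(Q)$ is a genuine infinitesimal flex of $G+H$. The cross edges (those between the two classes) are handled automatically by the computation already given for the complete bipartite case. For an extraneous edge $v_iv_j$ inside a single class both endpoints receive the same sign, and a short computation using the vertex constraints $(p_i,1)^TQ(p_i,1)=(p_j,1)^TQ(p_j,1)=0$ reduces the flex condition $(p_i-p_j)\cdot(\mathrm{flex}_i-\mathrm{flex}_j)=0$ to precisely the edge constraint $(p_i,1)^TQ(p_j,1)=0$. Thus $\Phi$ carries $N$ into the infinitesimal flexes of $G+H$. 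The key structural observation is that, because $G+H$ has all the cross edges of $E_a+E_b$ together with extra ones, every infinitesimal flex of $G+H$ is in particular an infinitesimal flex of the complete bipartite graph $E_a+E_b$.

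Next I would prove that $\Phi$ induces an isomorphism $N\cong\{\text{flexes of }G+H\}/\{\text{Euclidean motions}\}$. For surjectivity, take any flex $f$ of $G+H$; regarded as a flex of $E_a+E_b$, Proposition \ref{inout} gives $f=\Phi(Q)+e$ with $Q$ passing through all vertices and $e$ a Euclidean motion. Since both $f$ and $e$ preserve extraneous edge lengths, so does $\Phi(Q)$, whence $Q\in N$ and $f\equiv\Phi(Q)$ modulo Euclidean motions. For injectivity I would show that if $Q\in N$ and $\Phi(Q)$ is itself a Euclidean motion $p_i\mapsto Sp_i+t$ with $S$ skew-symmetric, then $Q=0$: equating $Ap_i+b=Sp_i+t$ on one class and $-(Ap_j+b)=Sp_j+t$ on the other, and using that each class contains $d+1$ affinely independent points (here balancedness is essential), forces $A=S=0$ and then $b=t=0$, after which the vertex constraints force the remaining coefficient to vanish. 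Hence $\dim N$ equals the dimension of the flex space modulo Euclidean motions.

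Combining these, $(G+H,p)$ is infinitesimally rigid exactly when its only flexes are Euclidean, i.e. when $N=0$, i.e. when the QRM has full column rank $\binom{d+2}{2}$. The step I expect to be the main obstacle is pinning down this isomorphism rather than a mere spanning statement, in particular the injectivity argument, which is where the balanced hypothesis does real work. A secondary obstacle is the boundary regime $v\geq\binom{d+2}{2}$, where Proposition \ref{inout} does not literally apply; there I would argue separately that for generic $p$ the vertex rows $m(v,v)$ are already independent enough to span $\R^{\binom{d+2}{2}}$ (so the QRM has full rank) while the framework simultaneously contains a rigid complete bipartite subframework, making both sides of the equivalence hold.
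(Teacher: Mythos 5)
Your proposal is correct and follows essentially the same route as the paper: both identify the null space of the QRM with the quadrics whose quadric flexes preserve the extraneous edge lengths, and then invoke Whiteley's Proposition \ref{inout} to conclude that these account for all infinitesimal flexes of the balanced joined framework modulo Euclidean motions. You simply supply details the paper leaves implicit (notably the injectivity of the quadric-to-flex map, where balancedness is used, and the regime $v\geq\binom{d+2}{2}$), which strengthens rather than changes the argument.
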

\begin{proof}
The rank is less than $\binom{d+2}{2}$ if and only if a quadric surface satisfying those constraints exists. Since balanced complete bipartite frameworks only have quadric flexes by Proposition \ref{inout}, a balanced joined graph is flexible if and only if there is a quadric surface satisfying all constraints.
\end{proof}

\begin{prop}
\label{eq}
Let $G_1=G+H$ and $G_2=G'+H'$ be two balanced joined graphs where $G \cup H$ is isomorphic to $G' \cup H'$. That is, the resulting graphs from deleting all non-extraneous edges are isomorphic. Then $G+H$ is generically locally rigid if and only if $G'+H'$ is generically locally rigid.
\end{prop}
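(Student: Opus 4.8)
The plan is to lean on the preceding proposition, which says that a balanced joined framework $(G+H,p)$ is infinitesimally rigid exactly when its quadric rigidity matrix has full rank $\binom{d+2}{2}$, and then observe that the QRM is essentially blind to the join structure. Concretely, the rows of the QRM are the vertex constraints $m(v,v)$ for $v \in V$ together with the edge constraints $m(v_i,v_j)$ for extraneous edges $v_iv_j \in E'$; the cross edges between $V_G$ and $V_H$ contribute no rows at all. Since the extraneous edges are precisely the edges of the disjoint union, the QRM is determined by the graph $G \cup H$ and the configuration alone, and does not see how the vertices are split into the two sides of the join. This is the whole idea: the hypothesis $G \cup H \cong G' \cup H'$ is tailored to make the two matrices ``the same.''

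First I would fix a graph isomorphism $\phi : G \cup H \to G' \cup H'$, which by definition is a vertex bijection carrying edges to edges, hence extraneous edges of $G_1$ to extraneous edges of $G_2$. Given any configuration $p$ of $G_1$, define a configuration $p'$ of $G_2$ by $p'_{\phi(v)} := p_v$. Then each vertex row $m(p_v,p_v)$ of the QRM of $(G_1,p)$ is literally a vertex row $m(p'_{\phi(v)},p'_{\phi(v)})$ of the QRM of $(G_2,p')$, and each extraneous-edge row $m(p_{v_i},p_{v_j})$ equals the extraneous-edge row $m(p'_{\phi(v_i)},p'_{\phi(v_j)})$ because $\phi(v_i)\phi(v_j)$ is extraneous in $G_2$. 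Thus the two quadric rigidity matrices agree up to a permutation of rows, and so have equal rank for every corresponding pair $(p,p')$.

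Next I would pass to genericity. The rank of the QRM is the largest $k$ for which some $k\times k$ minor, a polynomial in the coordinates, does not vanish identically, so the maximal rank over all configurations is attained on a Zariski-dense set and in particular at every generic configuration. Because $p \mapsto p'$ is a rank-preserving bijection on configurations, $G_1$ and $G_2$ have the same maximal QRM rank, and a generic framework of $G_1$ attains rank $\binom{d+2}{2}$ if and only if a generic framework of $G_2$ does. By the preceding proposition this means $G_1$ is generically infinitesimally rigid if and only if $G_2$ is, and since each graph has at least $2(d+1)>d+1$ vertices, Theorem \ref{inf} converts infinitesimal rigidity into generic local rigidity on both sides, giving the claim. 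The one point to watch — not so much an obstacle as a hypothesis check — is that the QRM rank criterion is only licensed by Proposition \ref{inout} when the joined graph is balanced, i.e. $|V_G|,|V_H|\geq d+1$; this must hold for \emph{both} decompositions, since the isomorphism $G\cup H\cong G'\cup H'$ does not by itself determine the individual class sizes $|V_{G'}|,|V_{H'}|$. Fortunately both $G_1$ and $G_2$ are assumed to be balanced joined graphs, so the criterion applies to each.
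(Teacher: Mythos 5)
Your proposal is correct and follows essentially the same route as the paper: the paper's proof is the one-line observation that $(G_1,p)$ and $(G_2,p)$ have the same QRM because the matrix depends only on the vertices and extraneous edges, which is exactly your central point. Your version simply makes explicit the transport of configurations along the isomorphism, the genericity of the rank, and the balancedness check needed to invoke the QRM rank criterion.
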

\begin{proof}
$(G_1,p)$ and $(G_2,p)$ have the same QRM, since the matrix is only dependent on the extraneous edges and the vertices, and not on the non-extraneous edges.
\end{proof}

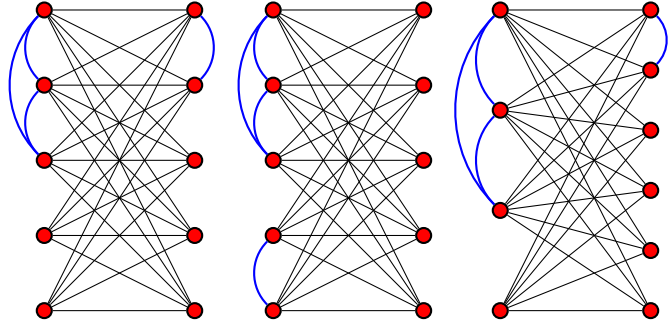
\begin{figure}[ht]

\begin{tikzpicture}
[v/.style={circle,draw=black!100,fill=red!100,thick,inner sep=2pt}]
\foreach \i in {0,...,4}
  \node (v\i) at (0,2-\i)[v]{};
\foreach \i in {5,...,9}
  \node (v\i) at (2,7-\i)[v]{};
\foreach \i in {0,...,4}
  \foreach \j in {5,...,9}
    \draw [-] (v\i) to (v\j);
\draw[blue, thick, bend right=45] (v0) to (v1);
\draw[blue, thick, bend right=45] (v0) to (v2);
\draw[blue, thick, bend right=45] (v1) to (v2);
\draw[blue, thick, bend left=45] (v5) to (v6);
\end{tikzpicture}
\begin{tikzpicture}
[v/.style={circle,draw=black!100,fill=red!100,thick,inner sep=2pt}]
\foreach \i in {0,...,4}
  \node (v\i) at (0,2-\i)[v]{};
\foreach \i in {5,...,9}
  \node (v\i) at (2,7-\i)[v]{};
\draw[blue, thick, bend right=45] (v0) to (v1);
\draw[blue, thick, bend right=45] (v0) to (v2);
\draw[blue, thick, bend right=45] (v1) to (v2);
\draw[blue, thick, bend right=45] (v3) to (v4);
\foreach \i in {0,...,4}
  \foreach \j in {5,...,9}
    \draw [-] (v\i) to (v\j);
\end{tikzpicture}
\begin{tikzpicture}
[v/.style={circle,draw=black!100,fill=red!100,thick,inner sep=2pt}]
\foreach \i in {0,...,3}
  \node (v\i) at (0,2-4/3*\i)[v]{};
\foreach \i in {4,...,9}
  \node (v\i) at (2,5.2-4/5*\i)[v]{};
\draw[blue, thick, bend right=45] (v0) to (v1);
\draw[blue, thick, bend right=45] (v0) to (v2);
\draw[blue, thick, bend right=45] (v1) to (v2);
\draw[blue, thick, bend left=45] (v4) to (v5);
\foreach \i in {0,...,3}
  \foreach \j in {4,...,9}
    \draw [-] (v\i) to (v\j);
\end{tikzpicture}
\caption{Since the quadric rigidity matrix is the same for all three graphs, they are either all rigid, or all flexible in $\R^3$. }
\label{fig-samerigid}
\end{figure}

Figure \ref{fig-samerigid} shows three graphs that satisfy the conditions in Proposition \ref{eq}. In particular, all three graphs are generically locally rigid because ten generic points do not lie on a quadric surface in $\R^3$.

The QRM presents a faster method of deciding local rigidity for balanced joined frameworks since the dimensions of the QRM is strictly smaller than that of the rigidity matrix. Let $e'$ be the number of extraneous edges. Then, the number of rows in the QRM is $|V_G|+|V_H|+e'$, which is less than $|V_G||V_H|+e'$, the number of rows in the rigidity matrix. The number of columns in the QRM is $\binom{d+2}{2} < 2(d+1)d \leq vd$, so the QRM is smaller, overall. If we fix the dimension parameter, the complexity of determining local rigidity for a specific configuration using Gaussian elimination is reduced from $O(v^2e)$ to $O(v+e')$.

Recognizing a balanced joined graph takes exponential time by the naive algorithm of checking all balanced partitions of the vertices. We present an $O(|V|^2)$ algorithm. The complete bipartite graph $K_{a,b}$ has edge complement is the graph $K_a \cup K_b$, which has two connected components. A \emph{connected component} is an equivalence class of the relation ``$u$ is connected to $v$." For a balanced joined graph, its edge complement has at least two connected components, which can be partitioned into two sets of size at least $d+1$. Our algorithm uses dynamic programming and runs as follows for a graph $G$:

\begin{enumerate}
\item
If $|V_G| < 2d+2$, reject. 
\item
Compute the edge-complement of $G$.
\item
Using a depth-first search, find the vertex-sets $V_1, V_2..., V_n$ of the connected components of $\overline{G}$. 
\item
Initialize a string array A indexed from 1 to $|V|$.
\item
For each vertex-set $V_i$, do the following. Set A[$|V_i|$] := $``V_i"$. For each $j$ such that A[$j$] is nonempty, set A[$|V_i|+j$] := A[$j$] + ``$V_i$".
\item
If there is an $i$ such that $d+1 \leq i \leq |V_G|-(d+1)$ and A[$i$] is not an empty string, return A[$i$].
\end{enumerate}

Each step takes $O(|V|^2)$ time, so the overall algorithm runs in $O(|V|^2)$ time. At the end of step 5, A[$i$] is nonempty if and only if there exists a partition of the connected components such that the size of one bipartite class is $i$. Step 6 ensures that $i$ is chosen such that the size of both subsets is at least $d+1$. Note that if we did not require the joined graph to be balanced, we would only need to test connectivity on $\overline{G}$. 

\section{Partial Coning of Connelly's Graphs}

Connelly \cite{cn} provided the first known counterexamples to Hendrickson's conjecture by Theorem \ref{cg}. Let the \emph{coning} of a graph $G$ be the graph $G' = G+K_1$. That is, we add a new vertex and connect it to every other vertex. Connelly and Whiteley \cite{cw} demonstrate that the coning operation preserves all the forms of rigidity.

\begin{figure}[ht]
\begin{tikzpicture}
[v/.style={circle,draw=black!100,fill=red!100,thick,inner sep=2pt}]
\foreach \i in {1,...,3}
  \node (v\i) at (0,-1+\i)[v]{};
\foreach \i in {4,...,6}
  \node (v\i) at (1,-4+\i)[v]{};
\node (gl) at (1.5,0.2){$G$};
\draw [-] (v1) to (v2) to (v6) to (v3) to (v5) to (v4) to (v1);
\end{tikzpicture}
\hspace{20pt}
\begin{tikzpicture}
[v/.style={circle,draw=black!100,fill=red!100,thick,inner sep=2pt},
 c/.style={circle,draw=black!100,fill=black!100,thick,inner sep=2pt}]
\foreach \i in {1,...,3}
  \node (v\i) at (0,-1+\i)[v]{};
\foreach \i in {4,...,6}
  \node (v\i) at (1,-4+\i)[v]{};
\node (cc) at (-2,1.5)[c]{};
\node (gl) at (2,0.2){$G+K_1$};
\draw [-] (v1) to (v2) to (v6) to (v3) to (v5) to (v4) to (v1);
\foreach \i in {1,...,6}
  \draw [-,thick] (cc) to (v\i);
\end{tikzpicture}
\caption{A graph and its coning, respectively. }
\label{fig-coning}
\end{figure}
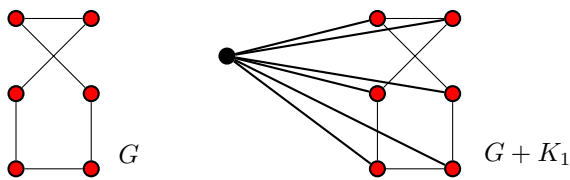

\begin{thm}[Connelly and Whiteley \cite{cw}]
The cone of a graph $G$ is generically [locally, redundantly, globally] rigid in $\R^d$ if and only if $G$ is generically [locally, redundantly, globally] rigid in $\R^{d-1}$.
\end{thm}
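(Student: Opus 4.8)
The plan is to prove the three biconditionals (local, redundant, and global) by building a dictionary between frameworks of $G$ in $\R^{d-1}$ and frameworks of the cone $G+K_1$ in $\R^d$. Because all three notions of generic rigidity are properties of the graph alone (Theorem \ref{inf} together with the Gortler--Healy--Thurston theorem \cite{ght}), it suffices for each notion to match the rigidity data of a single conveniently placed cone framework with that of a generic framework of $G$, and then appeal to genericity. The placement I would use is the apex ``at infinity'': by the projective invariance of infinitesimal rigidity, any generic cone framework in $\R^d$ may be moved so that the apex lies at infinity in a fixed direction $e_d$, which turns the $|V_G|$ cone edges into a parallel pencil while leaving the base vertices in general position. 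I write $\bar p_i$ for the projection of a base point $p_i$ to $e_d^{\perp}\cong\R^{d-1}$ and $p_{i,d}$ for its $e_d$-coordinate.

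For local rigidity I would argue on the rigidity matrix via Theorem \ref{inf}. With the apex at infinity along $e_d$, each cone edge forces the $e_d$-components of the base velocities to agree, so modulo a single apex-direction translation an admissible velocity field is determined by its projection to $e_d^{\perp}$. The key computation is that for a base edge $v_iv_j\in E_G$ the constraint $(p_i-p_j)\cdot(p_i'-p_j')=0$ collapses: the $e_d$-components of $p_i'-p_j'$ cancel because all apex-direction velocities are equal, leaving exactly the $\R^{d-1}$ condition $(\bar p_i-\bar p_j)\cdot(\bar p_i'-\bar p_j')=0$. Thus the infinitesimal flexes of the cone, modulo trivial motions, are in linear bijection with those of the projected base, the space of trivial motions growing by exactly $\binom{d+1}{2}-\binom{d}{2}=d$ to account for the extra vertex. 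Since generic local rigidity is equivalent to the existence of a single infinitesimally rigid framework, and this correspondence together with projective invariance matches such frameworks on the two sides, the cone is generically locally rigid in $\R^d$ iff $G$ is generically locally rigid in $\R^{d-1}$.

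For redundant and global rigidity I would propagate the dictionary to self-stresses. With the apex at infinity, the equilibrium at a base vertex splits into its $e_d^{\perp}$-part, which is precisely the $\R^{d-1}$ equilibrium of the projected base, and its $e_d$-part, which determines the stress on the cone edge by $\omega_{0i}=\sum_{j}\omega_{ij}(p_{i,d}-p_{j,d})$; the apex equilibrium $\sum_i\omega_{0i}=0$ then holds automatically by the symmetry $\omega_{ij}=\omega_{ji}$. This yields a linear bijection between self-stresses of $G$ in $\R^{d-1}$ and of the cone in $\R^d$ that preserves the value on each base edge. Redundant rigidity follows from Proposition \ref{red}: for a base edge $e\in E_G$ we have $(G+K_1)-e=(G-e)+K_1$, so by the local equivalence $e$ is redundant in the cone iff it is in $G$; and for a cone edge $v_0v_i$ the expression $\omega_{0i}=\sum_j\omega_{ij}(p_{i,d}-p_{j,d})$ is a nonconstant linear form in the generic heights $p_{\cdot,d}$ whenever $v_i$ carries a nonzero base stress, hence generically nonzero, so $v_0v_i$ is redundant exactly when $v_i$ is incident to a redundant edge of $G$. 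For $G$ generically redundantly rigid every base edge, and therefore every cone edge, is redundant, and the converse runs the same way; so the cone is generically redundantly rigid iff $G$ is. For global rigidity I would feed the same stress bijection into the stress-matrix characterization (Gortler--Healy--Thurston \cite{ght}): the cone's stress matrix is the base's bordered by one apex row and column, its kernel enlarged by exactly the homogenizing coordinate, so its rank is preserved; since the rank $v-d-1$ required for global rigidity is invariant under the shift $v\mapsto v+1,\ d\mapsto d+1$, a maximal-rank positive semidefinite stress exists for the cone iff one exists for $G$.

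I expect the main obstacle to be the stress bookkeeping of the third paragraph rather than the projection computation of the second. Two points need genuine care: justifying that the apex-at-infinity placement computes the \emph{generic} stress data, so that the projective move and the passage to infinity preserve not merely the rank of the rigidity matrix but the full structure of the stress space and of the stress matrix; and correctly isolating the cone edges, which do not correspond to edges of $G$ and must be handled by hand in both the redundant and the global arguments. Once the stress correspondence and the rank-preservation of the bordered stress matrix are pinned down precisely, the three biconditionals follow uniformly from the characterizations already assembled in the paper.
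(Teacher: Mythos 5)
The paper offers no proof of this theorem: it is imported verbatim from Connelly and Whiteley \cite{cw} and used as a black box, so there is no internal argument to compare yours against. Judged on its own merits, your first two paragraphs are a faithful reconstruction of the standard approach. The apex-at-infinity computation for infinitesimal rigidity (cone edges force equal $e_d$-components of the velocities, base-edge constraints then collapse to the $(d-1)$-dimensional ones on the projections) is essentially Whiteley's original argument, and combined with projective invariance of the rank of the rigidity matrix and the fact that one infinitesimally rigid framework certifies generic local rigidity (Theorem \ref{inf}), it gives the local biconditional. The reduction of redundant rigidity for base edges via $(G+K_1)-e=(G-e)+K_1$ is also correct.

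The global-rigidity paragraph, however, contains a concrete error and hides the hard step. The cone's stress matrix is \emph{not} the base's stress matrix bordered by an apex row and column: the diagonal entry of a base vertex $v_i$ is $\sum_j\omega_{ij}$, which in the cone acquires the additional term $\omega_{0i}$, so every diagonal entry of the base block changes and the rank-preservation claim does not follow from any bordering argument. Establishing that $\operatorname{rank}$ of the stress matrix (equivalently, that its kernel grows by exactly one dimension) is preserved under coning is the central technical content of \cite{cw}; it requires identifying the kernel of the coned stress matrix with a projective/homogenized image of the base configuration, and it interacts with your other acknowledged worry, namely that after sending the apex to infinity the framework is no longer generic, so the stress space you compute there need not be the generic one. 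A smaller point: the Gortler--Healy--Thurston characterization asks only for a stress matrix of rank $v-d-1$ at a generic framework, with no positive semidefiniteness; PSD belongs to Connelly's sufficient condition, and invoking it here would prove the wrong statement. The same genericity-transfer issue afflicts your redundancy argument for the cone edges $v_0v_i$, which, unlike the base edges, do not reduce to a cone of a subgraph and genuinely need the stress correspondence to be established at a generic framework rather than at the degenerate one with the apex at infinity.
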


\begin{prop}
The cone of a graph $G$ is $(k+1)$-connected if $G$ is $k$-connected.
\end{prop}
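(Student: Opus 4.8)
The plan is to unwind the paper's definition of connectivity and split into two cases according to whether the apex vertex lies among the deleted vertices. Write $c$ for the cone vertex of $G + K_1$, so that $c$ is adjacent to every vertex of $G$. By the definition given earlier, to show that $G + K_1$ is $(k+1)$-connected it suffices to take an arbitrary set $S$ of $k$ vertices and argue that $(G + K_1) - S$ is connected.

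First I would handle the case $c \in S$. Then $S = \{c\} \cup S'$ with $S' \subseteq V_G$ and $|S'| = k-1$, and deleting $S$ from $G + K_1$ leaves exactly the graph $G - S'$. Since $G$ is $k$-connected and $|S'| = k-1$, the graph $G - S'$ is connected directly by the definition of $k$-connectivity, which settles this case immediately.

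Next I would treat the case $c \notin S$, so that $S \subseteq V_G$ with $|S| = k$. Here the key observation is that $(G + K_1) - S$ still contains the apex $c$, and $c$ remains adjacent to every surviving vertex of $V_G \setminus S$. Hence every remaining vertex is joined to $c$ by an edge, so the graph is connected regardless of whether $G - S$ is itself connected; the apex reconnects any components that the deletion might have separated. If $V_G \setminus S$ happens to be empty, the remaining graph is the single vertex $c$, which is trivially connected.

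Since both cases yield a connected graph after deleting any $k$ vertices, $G + K_1$ is $(k+1)$-connected. I expect no serious obstacle in this argument; the only point requiring a moment of care is the second case, where one must notice that the apex alone guarantees connectivity even when $G - S$ is disconnected, together with the degenerate subcase in which all of $G$'s vertices are removed.
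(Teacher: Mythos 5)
Your proof is correct and follows exactly the same two-case decomposition as the paper's own argument: delete the apex and invoke $k$-connectivity of $G$, or keep the apex and note it is adjacent to every surviving vertex. The extra attention to the degenerate subcase is fine but adds nothing essential.
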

\begin{proof}
In the cone of $G$, there are two different ways to delete $k$ vertices. If the cone vertex is deleted, then the result follows immediately from the $k$-connectivity of $G$. If the cone vertex is not deleted, then the resulting graph is still connected because the cone vertex is adjacent to all other vertices.
\end{proof}

In particular, the cone of a graph in Theorem \ref{cg} is also GAGR in the next-highest dimension. It turns out that for those graphs in $\R^5$ and above, only a \emph{partial coning} is necessary. A partial coning is where the cone vertex is joined to only a subset $V' \subsetneq V_G$. We provide a specific type of partial coning that yields a family of GAGR graphs. A partial coning of $K_{9,6}$ as shown in Figure \ref{fig-partial} is the smallest graph in this family.

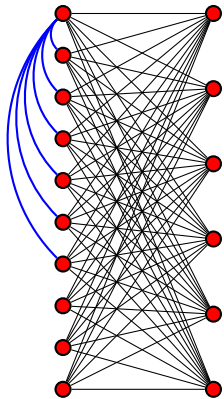
\begin{figure}[ht]
\begin{tikzpicture}
[v/.style={circle,draw=black!100,fill=red!100,thick,inner sep=2pt}]
\foreach \i in {0,...,9}
  \node (v\i) at (0,-1*{5/9*\i})[v]{};
\foreach \i in {0,...,5}
  \node (w\i) at (2,-1*\i)[v]{};
\foreach \i in {0,...,9}
  \foreach \j in {0,...,5}
    \draw [-] (v\i) to (w\j);
\foreach \i in {1,...,6}
\draw[blue, thick, bend right=45] (v0) to (v\i);
\end{tikzpicture}
\caption{A partial coning reimagined as a joined graph. This graph is GAGR in $\R^5$ by Theorem \ref{pc}. }
\label{fig-partial}
\end{figure}

\begin{thm}
\label{pc}
If $a > b\geq d+1$ and $a+b=\binom{d+1}{2}+1$, then the partial coning $(K_{(1,d+1)}\cup E_{(a-d-2)}) + E_b$ is generically almost-globally rigid in $\R^d$ for $d>3$.
\end{thm}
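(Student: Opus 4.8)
The plan is to verify, for $G := (K_{1,d+1}\cup E_{a-d-2})+E_b$, the three features that make it a counterexample to Hendrickson's conjecture (Theorem \ref{hend}): that it is $(d+1)$-connected, generically redundantly rigid, yet not generically globally rigid. Write $c$ for the apex of the star $K_{1,d+1}$ and $\ell_1,\dots,\ell_{d+1}$ for its leaves, and note that deleting $c$ leaves the complete bipartite graph $K_{a-1,b}$. Connectivity is immediate: $G$ contains $K_{a,b}$ as a spanning subgraph and $\min(a,b)=b\ge d+1$, so $G$ is at least $(d+1)$-connected. For the failure of global rigidity I would pass to the full cone $\hat G:=K_{a-1,b}+K_1$. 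Since $(a-1)+b=\binom{d+1}{2}=\binom{(d-1)+2}{2}$ and $a-1,b\ge d+1=(d-1)+2$, Theorem \ref{cg} makes $K_{a-1,b}$ generically almost-globally rigid in $\R^{d-1}$; the coning theorem of Connelly and Whiteley \cite{cw} then shows $\hat G$ is not generically globally rigid in $\R^d$, and since $G$ is a factor of $\hat G$, Proposition \ref{ggrsub} gives that $G$ is not generically globally rigid either. This is exactly where $d>3$ enters: Connelly's family is nonempty only in dimension $\ge 3$, i.e. for the base dimension $d-1\ge 3$.

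Local rigidity is where the quadric rigidity matrix does the work. The QRM of $G$ is $\bigl(\binom{d+2}{2}+1\bigr)\times\binom{d+2}{2}$, so $G$ is infinitesimally rigid exactly when its columns are independent, i.e. when the only symmetric $(d+1)\times(d+1)$ matrix $Q$ vanishing at every vertex and on every star edge is $Q=0$. The key observation is that the vertex constraint $\tilde c^{T}Q\tilde c=0$ together with the $d+1$ edge constraints $\tilde c^{T}Q\tilde\ell_i=0$ say that $Q\tilde c$ is orthogonal to the $d+2$ vectors $\tilde c,\tilde\ell_1,\dots,\tilde\ell_{d+1}$; for a generic configuration these span $\R^{d+1}$, forcing $Q\tilde c=0$. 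Thus $Q$ descends to a symmetric form on $\R^{d+1}/\langle\tilde c\rangle\cong\R^{d}$, a space of dimension $\binom{d+1}{2}$, and the remaining $a+b-1=\binom{d+1}{2}$ vertex constraints become the requirement that $\binom{d+1}{2}$ generic points impose independent conditions on this space, which they do, so $Q=0$ and $G$ is generically locally rigid. Running the same argument after deleting one star edge leaves $d$ leaves and the $d+1$ spanning vectors $\tilde c,\tilde\ell_1,\dots,\tilde\ell_{d}$, so $Q\tilde c=0$ is still forced and the vertex count still yields $Q=0$; hence every star edge is redundant.

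It remains to show the bipartite (join) edges are redundant, and this is where I expect the real work. When both sides are large, $b\ge d+2$, Theorem \ref{brz} gives $\dim\Omega(K_{a,b})=(a-d-1)(b-d-1)>0$, and the Bolker--Roth description \cite{br} of these stresses as combinations $\omega_{ij}=\sum_{p,q}c_{pq}\lambda^{(p)}_i\mu^{(q)}_j$ built from affine dependencies of the two point sets shows that each join edge carries a nonzero stress, since every generic point participates in an affine dependence once its side has at least $d+2$ points. Extending such a stress by zero across the star edges produces a stress of $G$ nonzero on the chosen edge, so by Proposition \ref{red} that edge is redundant; together with the previous paragraph this settles redundant rigidity in this range.

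The genuinely hard case is the boundary $b=d+1$, where $\Omega(K_{a,b})=0$ and a dimension count gives $\dim\Omega(G)=1$, so that redundancy of every join edge becomes the single assertion that the essentially unique equilibrium stress of $G$ vanishes on no edge; this is the analogue of the step Frank and Jiang \cite{fj} checked by computer, and I regard it as the main obstacle. My line of attack is the correspondence between this stress and the one-dimensional left null space of the QRM: because the right-hand vertices $y_1,\dots,y_{d+1}$ form a generic simplex, equilibrium at each left vertex forces its edge stresses to be proportional to the barycentric coordinates of that vertex with respect to the simplex, reducing redundancy of each of the four edge orbits to the non-vanishing of one explicit scalar (the net outgoing stress at a representative vertex) as a rational function of the configuration. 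The crux — and the step I would expect to demand the most care, since the degree-$(d+1)$ extra vertices sit right at the threshold where redundancy can fail — is to rule out the simultaneous degenerate vanishing of these scalars, either by exhibiting a single favorable configuration and invoking genericity, or by arguing directly that a vanishing would contradict the rank count established for the QRM.
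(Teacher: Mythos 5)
Your overall strategy coincides with the paper's (verify $(d+1)$-connectivity, redundant rigidity, and failure of global rigidity), and most steps are sound. Connectivity and non-global-rigidity are handled essentially as in the paper. For local rigidity the paper invokes Proposition \ref{eq} to transfer the QRM to the honest cone of a locally rigid bipartite graph and then cites the Connelly--Whiteley coning theorem, whereas you argue directly on the QRM: forcing $Q\tilde c=0$ from the vertex-plus-star constraints and projecting from the cone point. Your version is more self-contained, and it yields a cleaner proof that the $d+1$ extraneous edges are redundant than the paper's row-counting argument. For the join edges with $b\ge d+2$, your argument (a Bolker--Roth stress of $K_{a,b}$ extended by zero, then Proposition \ref{red} applied to the GLR graph $G$) is in fact the correct reading of the paper's appeal to Corollary \ref{bir}; note that the corollary cannot be cited literally, since $K_{a,b}$ with $a+b<\binom{d+2}{2}$ is not itself GLR.

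The genuine gap is the boundary case $b=d+1$, which you correctly flag but do not close. It is not vacuous: it occurs in every dimension $d>3$, e.g.\ $(a,b)=(6,5)$ in $\R^4$ and $(a,b)=(10,6)$ in $\R^5$ --- the latter being the paper's own first example, the partial coning of $K_{9,6}$. There $\dim\Omega(K_{a,b})=(a-d-1)(b-d-1)=0$, the full graph carries a one-dimensional stress space, and redundancy of every join edge is exactly the assertion that this single stress vanishes nowhere. Your barycentric-coordinate reduction is a reasonable start (equilibrium at a degree-$(d+1)$ vertex does force its edge stresses to be all zero or all nonzero), but the ``all zero'' alternative still has to be excluded, and the natural move of deleting such a vertex and counting stresses fails because the resulting graph loses a QRM row and acquires a quadric cone flex, so it is no longer GLR. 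Until that scalar is shown nonzero the proof of Theorem \ref{pc} is incomplete. For what it is worth, the paper's own proof does not close this case either: it asserts ``$a,b\geq d+2$'' and cites Corollary \ref{bir}, which does not apply when $b=d+1$. So you have put your finger on a real weak point of the published argument rather than overlooked an idea the paper supplies; but as a proof of the theorem as stated, your proposal does not yet go through.
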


\begin{proof} First, $(K_{(1,d+1)}\cup E_{(a-d-2)}) + E_b$ is in fact a partial coning of a GAGR complete bipartite graph. All extraneous edges are connected to the same vertex. Removal of that vertex leaves $K_{(a-1,b)}$. Since $a > b \geq d+1$, $a-1 \geq (d-1)+2$, so we have a GAGR graph in $\R^{d-1}$.

$(K_{(1,d+1)}\cup E_{(a-d-2)}) + E_b$ is not GGR, since it is a subgraph of a complete cone of a GAGR graph.

The only vertices we need to consider for $(d+1)$-connectivity are the vertices $u_1, u_2,...,u_{a-d+2}$ not connected to the cone vertex $c$. Since $b\geq d+2$, we cannot delete all the vertices of the second bipartite class, leaving at least one vertex $v$ intact. Then the path $u_i-v-c$ connects vertex $u_i$ to the rest of the graph.

When $a=d+2$, we have the coned graph of a GLR graph, which is itself GLR. By Proposition \ref{eq}, $(K_{(1,d+1)}\cup E_{(a-d-2)}) + E_b$ is GLR since it has the same QRM. Adding the $(d+1)$-th edge creates a linear dependency in the QRM because the row-size exceeds $\binom{d+2}{2}$, so that edge is redundant. By symmetry, all the extraneous edges are redundant. By Corollary \ref{bir}, the bipartite edges are redundant because $a,b \geq d+2$. Therefore, the entire graph is GRR.
\end{proof}

The partial conings we considered attached to an entire bipartite class, and furthermore, we only considered partial conings of complete bipartite graphs.

\begin{pro}
Classify all GAGR partial conings of Connelly's graphs, or of other GAGR graphs. 
\end{pro}

Theorem \ref{pc} can be generalized for multiple partial cones of Connelly's graphs. However, this requires a closer manipulation of the QRM that we will encounter in the next section. We conclude this section with an extension that covers weaker partial conings and other classes of GAGR graphs.

\begin{prop}
Let $G$ and $G'$ be generically almost-globally rigid graphs in $\R^d$ such that $G'$ is a factor of $G$. Then any factor $G''$ of $G$ such that $E_{G'} \subseteq E_{G''} \subseteq E_G$ is also generically almost-globally rigid in $\R^d$.
\end{prop}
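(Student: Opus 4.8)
The plan is to verify that $G''$ satisfies the three defining properties of a generically almost-globally rigid graph: it is $(d+1)$-connected, generically redundantly rigid, and not generically globally rigid. Each of these will come from a different hypothesis, and the two edge-monotonicity statements, Propositions \ref{ggrsub} and \ref{addedge}, will do most of the work.

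I would first handle the two easier properties. Since $G''$ shares the vertex set of $G$ and $E_{G''} \subseteq E_G$, it is a factor of $G$; as $G$ is not generically globally rigid, Proposition \ref{ggrsub} gives at once that $G''$ is not generically globally rigid. For connectivity, recall that a generically almost-globally rigid graph meets Hendrickson's necessary conditions (Theorem \ref{hend}), so $G'$ is $(d+1)$-connected. Since $E_{G'} \subseteq E_{G''}$ on the same vertex set, $G''$ is obtained from $G'$ by adding edges, and vertex-connectivity never decreases under edge addition; hence $G''$ is $(d+1)$-connected.

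The heart of the argument is generic redundant rigidity. The key observation is that for every edge $e \in E_{G''}$, the deletion $G'' - \{e\}$ still contains a generically locally rigid factor, so Proposition \ref{addedge} promotes $G'' - \{e\}$ to GLR. I would split on whether $e$ lies in $E_{G'}$. If $e \notin E_{G'}$, then $E_{G'} \subseteq E_{G''}\setminus\{e\}$, so $G'' - \{e\}$ contains $G'$ as a factor; since $G'$ is GRR it is in particular GLR, and adding the remaining edges keeps $G'' - \{e\}$ GLR. If $e \in E_{G'}$, then because $G'$ is GRR, $G' - \{e\}$ is GLR, and $E_{G'}\setminus\{e\} \subseteq E_{G''}\setminus\{e\}$ shows $G'' - \{e\}$ contains $G' - \{e\}$ as a factor, so again it is GLR. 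As every single-edge deletion of $G''$ is GLR, $G''$ is generically redundantly rigid.

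The only point requiring care, and hence the main obstacle, is this last step: one must confirm that the case split over whether the deleted edge belongs to $G'$ exhausts all of $E_{G''}$, and that in each case the surviving graph genuinely contains a spanning GLR subgraph on which Proposition \ref{addedge} can be applied. Once that is in place, the three defining properties are established and $G''$ is generically almost-globally rigid in $\R^d$.
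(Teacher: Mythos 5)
Your proof is correct and follows the same route as the paper: non-global-rigidity from Proposition \ref{ggrsub} applied to the factor of $G$, $(d+1)$-connectivity from adding edges to the $(d+1)$-connected graph $G'$, and redundant rigidity via Proposition \ref{addedge}. The paper compresses all of this into one sentence; your case split on whether the deleted edge lies in $E_{G'}$ is exactly the detail the paper leaves implicit.
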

\begin{proof}
$E_{G'} \subset E_{G''}$ implies $(d+1)$-connectivity and the other conditions follow immediately from Propositions \ref{ggrsub} and \ref{addedge}.
\end{proof}

\section{Graph Attachments in Higher Dimensions}

Frank and Jiang \cite{fj} found a graph that could be ``attached" to other graphs in $\R^5$ to create GAGR graphs. We generalize the result to higher dimensions. For some $x_1,x_2,x_3,x_4\in\N$, let $G_i := E_{x_i}$ for $i \in \{1,2,3,4\}$. The \emph{4-chain} $C_{x_1,x_2,x_3,x_4}$ is the graph with vertex set $V = \bigcup_{i=1}^4V_{G_i}$ and edge set $E = \bigcup_{i=1}^3E_{G_i+G_{i+1}}$. Formally, a 4-chain can be thought of as $G_1 + G_2 + G_3 + G_4$. Frank and Jiang demonstrated that the 4-chain $C_{2,3,5,4}$ could be attached to certain graphs in $\R_5$ to yield GAGR graphs. Attaching a 4-chain to an arbitrary graph $G$, denoted $C_{w,x,y,z} \attach G$, is the result of the vertex amalgamation

\[(C_{w,x,y,z};w_1,w_2,...,w_{x_1}, z_1,z_2,...,z_{x_4})*(G; v_1, v_2,...,v_{x_1+x_4})\]

for some vertices $v_1, v_2,...,v_{x_1+x_4}$ in $V_G$. The vertex amalgamation attaches the vertices of the two ends of the chain to some vertices in $G$. As demonstrated in the proof, the choice of vertices is irrelevant for sufficiently rigid graphs. We can now state the result of Frank and Jiang.

\begin{thm}[Frank and Jiang \cite{fj}]
Let $G$ be a generically redundantly rigid, 6-connected graph. Then $C_{2,3,5,4} \attach G$ is generically almost-globally rigid in $\R^5$.
\end{thm}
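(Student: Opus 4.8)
The plan is to verify the three properties that together constitute generic almost-global rigidity in $\R^5$: that $C_{2,3,5,4}\attach G$ is $6$-connected, that it is generically redundantly rigid (which subsumes generic local rigidity), and that it is not generically globally rigid. The organizing idea is to use the rigidity of $G$ to turn the problem into one about the chain alone. Since $G$ is generically redundantly rigid it is in particular generically locally rigid, so in a generic framework its only infinitesimal motions are Euclidean; subtracting a Euclidean motion, I may assume any infinitesimal flex of the attachment vanishes on all of $V_G$, and hence on the amalgamated end classes $G_1$ (size $2$) and $G_4$ (size $4$). Thus $G$ behaves as a rigid body pinning these $6$ vertices in generic position, and the whole rigidity question collapses to the chain with its ends pinned.

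The heart of the argument is local rigidity. After pinning $G_1\cup G_4$, a surviving flex is an infinitesimal flex of the chain vanishing on those $6$ points. Because chain edges join only consecutive classes, the chain is the complete bipartite graph $K_{7,7}$ on classes $G_1\cup G_3$ and $G_2\cup G_4$ with the edges between $G_1$ and $G_4$ deleted; but those missing edges have both endpoints pinned, so a flex vanishing on $G_1\cup G_4$ respects them automatically and is a genuine flex of $K_{7,7}$. Now $K_{7,7}$ is balanced ($7\geq d+1$) with $14<\binom{d+2}{2}=21$ vertices, so by Proposition \ref{inout} every such flex is a quadric flex plus a Euclidean motion, and its flex space has dimension $\binom{d+1}{2}+(21-14)=15+7=22$. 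Local rigidity is therefore equivalent to injectivity of the linear map evaluating this $22$-dimensional space at the $6$ pinned vertices, which split as $2$ points in one bipartite class and $4$ in the other. I would establish this by parameterizing the quadric part via the kernel of the quadric rigidity matrix of $K_{7,7}$ and showing that the resulting $30\times 22$ evaluation system attains full column rank $22$ at a generic configuration; since the rank-deficient locus is a proper subvariety, it suffices to produce or structurally argue for one configuration of rank $22$. This rank computation, which encodes exactly why the widths $(2,3,5,4)$ and the $2/4$ split of pinned vertices suffice in dimension $d=5$, is the main obstacle, and it is the step Frank and Jiang carried out by computer that the quadric rigidity matrix renders conceptual.

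Granting local rigidity, redundant rigidity follows cheaply. The chain contributes $2\cdot 3+3\cdot 5+5\cdot 4=41$ edges constraining the $8\cdot 5=40$ degrees of freedom of the interior vertices $G_2\cup G_3$, so a rank-$40$ (rigid) pinned framework carries a one-dimensional stress space; I would check this stress has full support, which by Proposition \ref{red} makes every chain edge redundant. For an edge $e$ of $G$, the graph $C_{2,3,5,4}\attach(G-e)$ is again rigid, since the pinning argument used only that $G-e$ is generically locally rigid, and this holds because $G$ is redundantly rigid; hence every edge of $G$ is redundant as well, and the attachment is generically redundantly rigid. Six-connectivity is then a short, direct check: $G$ is $6$-connected, each interior vertex has degree $7$, and the only adjacencies between the interior and $V_G$ pass through the end classes $G_1$ and $G_4$ of total size $6$, so separating any interior vertex forces deletion of essentially both end classes, whence no vertex cut has size less than $6$.

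Finally, the attachment is not generically globally rigid, and I would argue this by a gluing principle. Since the $6$ pinned ends span $\R^5$ there are no nontrivial congruences available, so it suffices to exhibit a second placement of the interior vertices, with $G$ unchanged, that preserves all chain edge lengths: keeping $G$ fixed and re-realizing the interior yields a framework of the whole attachment that is equivalent but not congruent. Such a second realization of the complete bipartite sub-framework is exactly the projective/quadric second-realization phenomenon underlying Connelly's counterexamples (Theorem \ref{cg}); the point requiring care is to choose the realizing transformation so that it fixes $G_1\cup G_4$, leaving the realization of $G$ intact while displacing $G_2\cup G_3$. I expect confirming the existence of a real, non-congruent such configuration to be the delicate part of this step, but it is secondary to the rank-$22$ verification in the local-rigidity argument, which is where the specific widths $(2,3,5,4)$ and the dimension $d=5$ genuinely enter.
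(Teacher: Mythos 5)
Your reduction to a pinned chain is sound and close in spirit to the paper's comparison of $\C(i,d)\attach G$ with $\C(i,d)\attach K_{|V_G|}$, and your dimension counts are correct. But the proof has a genuine gap exactly where you say the ``main obstacle'' lies: you never establish that the $30\times 22$ evaluation system has rank $22$, i.e.\ that no nonzero combination of a Euclidean motion and a quadric flex of $K_{7,7}$ vanishes at the six pinned points. That statement \emph{is} the content of the theorem --- it is the step Frank and Jiang could only verify by computer --- so deferring it to ``produce or structurally argue for one configuration of rank $22$'' leaves the proof unfinished. The paper closes this gap by rewriting $\C(3,5)\attach K_6$ as the joined graph $(K_2\cup E_5)+(K_4\cup E_3)$, observing via Proposition \ref{eq} that its quadric rigidity matrix depends only on $K_2\cup K_4\cup E_8$, and then proving by induction on $d$ (the lemma on $(K_2\cup E_{2d-4})+(K_{d-1}\cup E_2)$: start from Proposition \ref{lat} in $\R^2$, add three vertices and $d-2$ extraneous edges per step, and run a block-triangular rank argument) that this QRM has full rank $\binom{d+2}{2}$. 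Some such explicit rank certificate is unavoidable --- the six pinned points are split $2/4$ between the bipartite classes and must be tied together by enough constraints --- and nothing in your write-up verifies that this particular split works.

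Two further points. Your claim that the one-dimensional stress ``has full support'' on the chain edges is likewise asserted rather than proved; the paper instead compares $\dim\Omega$ of the attachment with the Bolker--Roth count (Theorem \ref{brz}) for the underlying complete bipartite graph, concluding that every stress vanishes on the extraneous edges and that the bipartite ($=$ chain) edges carry nonzero stress, which is what Proposition \ref{red} requires. And your route to non-global-rigidity --- constructing an explicit second realization of the interior fixing $G_1\cup G_4$ --- is much harder than necessary and again left as ``delicate''; the paper gets this for free: since the extraneous edges of $\C(i,d)\attach K_{d+1}$ carry no stress they are non-redundant, so that graph is not generically globally rigid by Theorem \ref{hend}, and non-global-rigidity passes to $\C(i,d)\attach K_j$ and then to the factor $\C(i,d)\attach G$ by Proposition \ref{ggrsub}. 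I recommend replacing both of these steps with the paper's arguments.
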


\begin{figure}[ht]
\begin{tikzpicture}
[v/.style={circle,draw=black!100,fill=red!100,thick,inner sep=2pt}]
\draw[dashed] (0,0) circle (1);
\draw[dashed] (3,0) circle (1);
\draw[dashed] (0,-3) circle (1);
\draw[dashed] (3,-3) circle (1);
\draw[-,dashed]          (0,-1) to (0,-2);
\draw[-,dashed]          (3,-1) to (3,-2);
\draw[-,dashed]          (1, 0) to (2, 0);
\foreach \i in {0,...,2}
  \node (v\i) at ({cos(\i*120)},{sin(\i*120)})[v]{};
\foreach \i in {0,...,4}
  \node (w\i) at (3+{cos(\i*72)},{sin(\i*72)})[v]{};
\foreach \i in {0,1}
  \node (y\i) at ({cos(90+\i*180)},-3+{sin(90+\i*180)})[v]{};
\foreach \i in {2,...,5}
  \node (y\i) at (3+{cos(\i*90)},-3+{sin(\i*90)})[v]{};
\foreach \i in {0,...,5}
  \foreach \j in {0,...,5}
    \draw [-,blue,thick] (y\i) to (y\j);
\end{tikzpicture}
\caption{The graph attachment $C_{2,3,5,4} \attach K_6$. A circle of vertices represents each independent set of the 4-chain, and a dashed line between independent sets represents a graph join. $K_6$ is highlighted by the bolded edges. }
\label{fig-attach}
\end{figure}
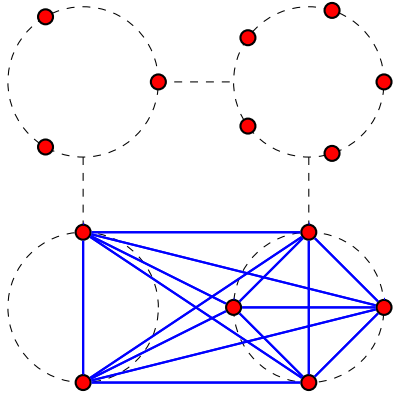

We present the following generalization for higher dimensions.

\begin{thm}
\label{ga}
Let $G$ be a generically redundantly rigid, $(d+1)$-connected graph. Then
\[C_{2,i,2d-2-i,d-1} \attach G,\]
where $2 < i < d-1$, is generically almost-globally rigid in $\R^d$.
\end{thm}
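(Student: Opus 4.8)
The plan is to verify the three defining properties of a generically almost-globally rigid graph for $\Gamma := C_{2,i,2d-2-i,d-1} \attach G$: that it is $(d+1)$-connected and generically redundantly rigid (Hendrickson's necessary conditions, Theorem \ref{hend}), but is \emph{not} generically globally rigid. The connectivity is a direct graph argument, so the real content is redundant rigidity, which I split into generic local rigidity---the conceptual replacement for the computer-assisted step of Frank and Jiang---followed by showing every edge is redundant.

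Label the four independent sets $A,B,C,D$ of sizes $2,i,2d-2-i,d-1$, with $A,D$ amalgamated into $G$. The key step is to regroup the chain into the two classes $P=A\cup C$ and $Q=B\cup D$, of sizes $2d-i$ and $i+d-1$; for $2<i<d-1$ both are at least $d+2$, so the complete bipartite graph $K=P+Q$ is balanced, and since $|P|+|Q|=3d-1<\binom{d+2}{2}$ it lies below the Connelly boundary. The chain is exactly $K$ with the $A$--$D$ cross-edges deleted. Because $G$ is locally rigid, $A\cup D$ moves as an infinitesimal rigid body in any flex, which re-imposes the deleted $A$--$D$ distances; hence the infinitesimal flexes of $\Gamma$ modulo Euclidean motions are precisely the flexes of $K$ that restrict to a rigid motion on $A\cup D$. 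A short computation with the quadric flex $v=Qp$ on $P$ and $v=-Qp$ on $Q$ shows that the $A$--$D$ cross-pairs are preserved automatically, while rigidity on $A\cup D$ is equivalent to $p_x^TQp_y=0$ for the pairs within $A$ and within $D$. These are exactly the vertex and extraneous-edge constraints of the balanced joined graph $\hat{G}$ with classes $P,Q$ and extraneous edges on the within-$A$ and within-$D$ pairs, so by Propositions \ref{eq} and \ref{inout} local rigidity of $\Gamma$ is equivalent to nonsingularity of the quadric rigidity matrix of $\hat{G}$. Remarkably, this matrix has $(3d-1)+1+\binom{d-1}{2}=\binom{d+2}{2}$ rows and $\binom{d+2}{2}$ columns, so it is square, and local rigidity holds iff its determinant is not identically zero as a polynomial in the configuration.

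For redundant rigidity I would count directly: pinning $A\cup D$, the vertices $B\cup C$ carry $(2d-2)d$ coordinates against $2i+i(2d-2-i)+(2d-2-i)(d-1)$ edge constraints, an excess of exactly $(i-2)(d-1-i)$, which is positive precisely on $2<i<d-1$ and vanishes at the endpoints. Once local rigidity is known, rank-nullity identifies this excess with the dimension of the space of equilibrium stresses carried by the chain edges; by the symmetry among the edges in each bipartite block such a stress is nonzero on every chain edge, so by Proposition \ref{red} each chain edge is redundant. Every edge inside $G$ is redundant because $G$ is itself redundantly rigid and the local-rigidity argument above used only that $A\cup D$ is rigid, a property that survives deleting any single edge of $G$; the vanishing of the excess at $i=2$ and $i=d-1$ is exactly why the open interval is required. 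The graph is $(d+1)$-connected since $B\cup C$ can be separated from the rest only by deleting all $d+1$ vertices of $A\cup D$, while $G$ is $(d+1)$-connected. Finally, $\Gamma$ is not generically globally rigid: holding the rigid subframework $G$, and hence $A\cup D$, fixed, the sub-boundary bipartite structure admits a second, non-congruent placement of $B\cup C$ preserving all chain lengths, as in Connelly's and Frank and Jiang's constructions, producing an equivalent but incongruent framework.

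The main obstacle is the nonsingularity of the square quadric rigidity matrix of $\hat{G}$. Its determinant is a polynomial in the configuration, and local rigidity amounts to this polynomial not vanishing identically whenever $P$ and $Q$ are balanced; verifying this uniformly in $d$ is the conceptual substitute for the single computer calculation Frank and Jiang performed in $\R^5$, and is where the real difficulty lies. I would attempt it either by exhibiting one explicit nonsingular configuration for each dimension or by an inductive reduction through coning to a low-dimensional base case, with the redundancy count above then pinning down the admissible range $2<i<d-1$.
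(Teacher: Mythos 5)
Your reduction is the right one and is essentially the paper's: regroup the chain into the classes $P=A\cup C$ and $Q=B\cup D$, use local rigidity of $G$ to force $A\cup D$ to move rigidly, and conclude that the flexes of the attachment are exactly the quadric flexes of the balanced complete bipartite graph $P+Q$ killed by the vertex constraints together with the edge constraints from the pairs inside $A$ and inside $D$ --- i.e.\ by the square $\binom{d+2}{2}\times\binom{d+2}{2}$ quadric rigidity matrix of $(K_2\cup E_{2d-2-i})+(K_{d-1}\cup E_i)$. The stress count $(i-2)(d-1-i)$, the comparison with $\dim\Omega(K_{2d-i,d+i-1})$ to see that stresses vanish on the extraneous edges, and the connectivity argument all track the paper. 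But you stop exactly at the theorem's real content: you never prove that this square matrix is generically nonsingular, and you say yourself that this is ``where the real difficulty lies.'' That nonsingularity is the entire point of this section --- it is the conceptual replacement for Frank and Jiang's computer verification --- and the paper devotes a separate lemma to it, arguing by induction on $d$: embed a maximal-rank generic configuration of $H_{d-1}=(K_2\cup E_{2d-6})+(K_{d-2}\cup E_2)$ into the hyperplane $x_d=0$, add three new vertices supported on the last two coordinates together with the $d-2$ new extraneous edges, and check that the $(d+1)\times(d+1)$ block of new rows and columns is itself block triangular with nonsingular diagonal blocks. Without this step (or an equivalent explicit construction), generic local rigidity is not established, and everything downstream --- redundancy via the stress count, and hence the theorem --- remains conditional.

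A secondary gap is your argument that the attachment is not generically globally rigid: ``the sub-boundary bipartite structure admits a second, non-congruent placement of $B\cup C$'' is asserted, not proved, and Connelly's second placement is built for $K_{a,b}$ with $a+b=\binom{d+2}{2}$, which is not the configuration here. Note also that Hendrickson's theorem cannot be applied to $\C(i,d)\attach G$ directly, since that graph \emph{is} redundantly rigid. The paper's route is to observe that $\C(i,d)\attach K_{d+1}$ has non-redundant (extraneous) edges, hence is not GGR by Theorem \ref{hend}; then to pass to $\C(i,d)\attach K_j$ by attaching vertices to a globally rigid subframework, and finally to $\C(i,d)\attach G$ as a factor via Proposition \ref{ggrsub}. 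You will need some such argument in place of the appeal to ``Connelly's and Frank and Jiang's constructions.''
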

For simplicity, let $\C(i,d) := C_{2,i,2d-2-i,d-1}$. Then $\C(3,5)$ is $C_{2,3,5,4}$. The one part of their proof that does not immediately generalize in higher dimensions involves demonstrating that $C_{2,3,5,4}\attach K_6$ is GLR, in which they provide only a computer-aided proof.

The graph $\C(3,5)\attach K_6$ can be rewritten as the joined graph $(K_2\cup E_5) + (K_4\cup E_3)$. In general, $\C(i,d)\attach K_{d+1}$ is the joined graph $(K_2 \cup E_{(2d-2-i)})+(K_{d-1}\cup E_i)$. Reinterpreting the attachment as a joined graph allows us to apply the QRM.

\begin{lem}
For $d \geq 3$, the joined graph $(K_2 \cup E_{(2d-4)}) + (K_{d-1}\cup E_2)$ is generically locally rigid.
\end{lem}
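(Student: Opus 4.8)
The plan is to apply the quadric rigidity matrix. First I would verify that the graph is a balanced joined graph for $d \geq 3$: writing $G = K_2 \cup E_{2d-4}$ and $H = K_{d-1}\cup E_2$, we have $|V_G| = 2d-2 \geq d+1$ and $|V_H| = d+1$, so both classes have at least $d+1$ vertices, and the underlying $K_{2d-2,\,d+1}$ has $3d-1 < \binom{d+2}{2}$ vertices. The number of extraneous edges is $e' = 1 + \binom{d-1}{2}$ (one from $K_2$, the rest from $K_{d-1}$), so the QRM has exactly $(3d-1) + 1 + \binom{d-1}{2} = \binom{d+2}{2}$ rows and $\binom{d+2}{2}$ columns. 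It is therefore \emph{square}, and by the proposition relating the QRM rank to infinitesimal rigidity it suffices to show that its determinant, a polynomial in the coordinates, does not vanish identically; equivalently, that for some configuration no nonzero quadric $Q$ satisfies all the vertex and extraneous-edge constraints. Generic nonsingularity then yields infinitesimal rigidity, and Theorem \ref{inf} upgrades this to generic local rigidity.

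The key reduction is to read the extraneous-edge constraints as subspace conditions. Homogenize each point as $\hat p = (p,1) \in \R^{d+1}$, so that a quadric is a symmetric $(d+1)\times(d+1)$ matrix $Q$ and the constraints are $\hat p_i^\top Q \hat p_j = 0$. The $K_{d-1}$ cluster contributes all constraints $\hat c_i^\top Q \hat c_j = 0$ (vertex constraints for $i=j$, edge constraints for $i \neq j$), which together say exactly that $Q$ vanishes on $W := \mathrm{span}(\hat c_1,\dots,\hat c_{d-1})$. Likewise the $K_2$ edge, together with its two endpoint-vertex constraints, says $Q$ vanishes on $U := \mathrm{span}(\hat a, \hat b)$. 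For generic points $U$ and $W$ are complementary, since $\dim U + \dim W = 2 + (d-1) = d+1$, so I would take the splitting $\R^{d+1} = U \oplus W$ and block-decompose $Q$ accordingly. The conditions $Q|_U = 0$ and $Q|_W = 0$ kill the two diagonal blocks, leaving only the cross block $Q_{UW}$, which carries $2(d-1) = 2d-2$ free parameters.

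The remaining $2d-2$ isolated vertices (the $2d-4$ from $E_{2d-4}$ and the $2$ from $E_2$) each contribute one vertex constraint. Writing $\hat p_k = u_k + w_k$ with $u_k \in U$, $w_k \in W$, and using that the diagonal blocks vanish, the constraint $\hat p_k^\top Q \hat p_k = 0$ collapses to $u_k^\top Q_{UW} w_k = 0$, a single linear condition on $Q_{UW}$. Regarding $Q_{UW}$ as a $2 \times (d-1)$ matrix $M$, this reads $\langle M,\, u_k w_k^\top\rangle = 0$, so the $2d-2$ constraints force $M = 0$ precisely when the rank-one matrices $u_k w_k^\top$ span the $2(d-1)$-dimensional space of $2\times(d-1)$ matrices. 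I would prove this spanning in two moves. First, the entire continuum $\{u(p)\,w(p)^\top : p \in \R^d\}$ spans: if a nonzero $M$ were orthogonal to all of them, the symmetric matrix with cross block $M$ and zero diagonal blocks would satisfy $\hat p^\top Q \hat p = 2\, u(p)^\top M\, w(p) = 0$ for every $p$, i.e.\ a nonzero quadric vanishing identically on $\R^d$ --- impossible, since the quadric coefficients are exactly the entries of $Q$. Second, because $p \mapsto u(p) w(p)^\top$ is a polynomial map whose image spans, one can select $2d-2$ values of $p$ at which the images are independent, so the corresponding $\binom{d+2}{2}\times\binom{d+2}{2}$ determinant is a non-identically-zero polynomial and hence is nonzero on a generic configuration.

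The main obstacle is the middle step: recognizing that the two complete-graph clusters are not $\binom{d}{2}+3$ unrelated scalar constraints but the clean subspace conditions $Q|_W = 0$ and $Q|_U = 0$, and that genericity makes $U$ and $W$ complementary so the whole problem collapses onto the single cross block $Q_{UW}$. Once that structure is in hand, the surviving question --- that generic rank-one tensors $u_k w_k^\top$ fill $\R^2 \otimes \R^{d-1}$ --- is settled by the observation that a nonzero quadric cannot vanish identically, so I expect no essential difficulty beyond the standard passage from ``the image of a polynomial map spans'' to ``generically many of its points are independent.''
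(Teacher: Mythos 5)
Your proof is correct, but it takes a genuinely different route from the paper's. The paper proceeds by induction on $d$: starting from $H_2$ (built from $K_2$ by Proposition \ref{lat}) it passes from $H_{d-1}$ to $H_d$ by adding three new vertices placed in the coordinate $2$-plane spanned by the last two coordinates together with $d-2$ new extraneous edges, and then verifies that the resulting QRM is block triangular with each new block of maximal rank. Your argument is direct and coordinate-free: homogenizing to $\hat p=(p,1)\in\R^{d+1}$, you observe that the vertex and edge constraints of the $K_{d-1}$ cluster say exactly that $Q$ vanishes on $W=\mathrm{span}(\hat c_1,\dots,\hat c_{d-1})$, the $K_2$ cluster says $Q$ vanishes on $U=\mathrm{span}(\hat a,\hat b)$, and generically $\R^{d+1}=U\oplus W$, so only the $2\times(d-1)$ cross block survives; the $2d-2$ isolated vertices then impose the conditions $\langle M,u_kw_k^\top\rangle=0$, and the rank-one matrices $u(p)w(p)^\top$ span $\R^2\otimes\R^{d-1}$ because a nonzero quadric cannot vanish identically on $\R^d$. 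Since the QRM is square of size $\binom{d+2}{2}$, trivial kernel at one configuration suffices. Each approach has its advantages: the paper's induction mirrors the vertex-addition structure and makes the block-triangular matrix explicit (as in Figure \ref{matrix}), while your splitting explains conceptually why the two complete clusters and the $2d-2$ isolated vertices exactly exhaust the $\binom{d+2}{2}$ degrees of freedom of a quadric, and it seems better suited to generalization to other joined graphs with complete clusters (e.g.\ toward Theorem \ref{gax}). I see no gap; the only points worth making explicit in a final write-up are that the cluster points are generically in general position (so the spans have the right dimensions and $U\cap W=0$) and the standard passage from ``the image of a polynomial map spans'' to ``generically chosen images are independent.''
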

\begin{proof}
For simplicity, let $H_d = (K_2 \cup E_{(2d-4)}) + (K_{d-1}\cup E_2)$. $H_d$ has $3d-1$ vertices and $1+\binom{d-1}{2}$ extraneous edges, so there are $\binom{d+2}{2}$ rows in the QRM.E

In $\R^2$, $H_2$ is achieved by applying Proposition \ref{lat} on $K_2$, so it is generically locally rigid. Thus, it has no quadric flex, so its QRM is of maximal rank. Although $H_2$ is not a balanced joined graph, all $H_d$ for $d > 2$ are balanced, so showing that the QRM for all $H_d$ has maximal rank is sufficient.

Assume $H_{d-1}$ is generically locally rigid in $\R^{d-1}$. Then consider the graph $H_d$ in $\R^d$. We can achieve this graph from the graph $H_{d-1}$ by adding one vertex in the first bipartite class, adding two vertices in the second bipartite class, and adding $d-2$ extraneous edges to one of the two vertices. In terms of the QRM, we take a $\binom{d+1}{2}\times\binom{d+1}{2}$ matrix and expand to a $\binom{d+2}{2}\times\binom{d+2}{2}$ matrix.

We add $d+1$ columns, namely the $d$ quadratic terms, denoted $Q_1, Q_2,...,Q_d$ (where $Q_i$ corresponds to the product of the $i$-th coordinate and the $d$-th coordinate), and 1 linear term, denoted $L_d$. We also add $d+1$ rows, formed by adding the three new vertices and $d-2$ extraneous edges. We only need to show that there exists some framework whose QRM has maximal rank\footnote{While the proof uses a framework that would have additional infinitesimal flexes (see Whiteley \cite{wh}), we are only interested in showing that the graph has no quadric flexes as the other flexes are not possible in generic frameworks.}.

\begin{figure}[ht]
\begin{tikzpicture}
[v/.style={circle,draw=black!100,fill=red!100,thick,inner sep=2pt},
 n/.style={circle,draw=black!100,fill=black!100,thick,inner sep=2pt}]
\draw[dashed] (0,0) circle (1);
\draw[dashed] (3,0) circle (1);
\draw[dashed] (0,-3) circle (1);
\draw[dashed] (3,-3) circle (1);
\draw[-,dashed]          (0,-1) to (0,-2);
\draw[-,dashed]          (3,-1) to (3,-2);
\draw[-,dashed]          (1, 0) to (2, 0);
\draw[-,dashed]          (1,-3) to (2,-3);
\foreach \i in {0,...,2}
  \node (v\i) at ({cos(\i*120)},{sin(\i*120)})[v]{};
\foreach \i in {0,...,4}
  \node (w\i) at (3+{cos(\i*72)},{sin(\i*72)})[v]{};
\foreach \i in {0,1}
  \node (y\i) at ({cos(90+\i*180)},-3+{sin(90+\i*180)})[v]{};
\foreach \i in {0,...,3}
  \node (z\i) at (3+{cos(\i*90)},-3+{sin(\i*90)})[v]{};
\node (w5) at (3+{cos(24)},{sin(24)})[n]{};
\node (w6) at (3+{cos(48)},{sin(48)})[n]{};
\node (z4) at (3+{cos(45)},-3+{sin(45)})[n]{};
\draw [-,blue,thick] (y0) to (y1);
\foreach \i in {0,...,4}
  \foreach \j in {0,...,4}
    \draw [-,blue,thick] (z\i) to (z\j);
\end{tikzpicture}
\caption{The induction step as applied to $C_{2,3,5,4}$. The new vertices are shown in a different color. }
\label{fig-induction}
\end{figure}
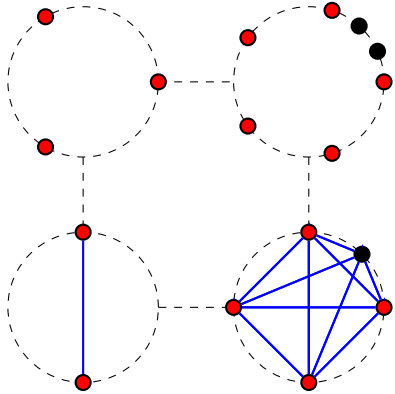

Select a generic framework for $H_{d-1}$ in $\R^{d-1}$. We include that framework into $\R^d$ such that a vertex $v = (v_1, v_2,..., v_{d-1})$ is mapped to $(v_1, v_2,..., v_{d-1}, 0)$. We add three new vertices with coordinates

\begin{align*}
a&=(0, 0, ..., 0, a_{d-1}, a_d) \\
b&=(0, 0, ..., 0, b_{d-1}, b_d) \\
c&=(0, 0, ..., 0, c_{d-1}, c_d)
\end{align*}

and edges $e_1, e_2,...,e_{d-2}$ all connected to $c$. Since the original vertices and extraneous edges have $0$ in the last coordinate, their values in $Q_i$ and $L_d$ are necessarily 0. It suffices to show that the matrix formed by the new rows and columns are of maximal rank, because the matrix is block triangular.

Since we chose all but the last two coordinates to be 0 for vertices $a,b,$ and $c$, their constraint mappings must have 0's in the $Q_2,Q_2,...,Q_{d-1}$ columns. Once again, this is a block triangular matrix, so we need to show that the edges are independent in those $d-2$ columns, and then the vertices in the remaining 3 columns. 

The edges connect vertex $c$ to a vertex in the original graph, so the submatrix formed by the $Q_2,Q_3,...,Q_{d-1}$ columns and the $e_1, e_2,...,e_{d-2}$ rows are coordinates from the original framework all multiplied by $c_d$. Since the determinant is an algebraic equation on the coordinates, it must be non-zero since we selected a generic framework in $\R^{d-1}$. We conclude that the submatrix is of maximal rank as long as $c_d\neq 0$.

We are left with the submatrix formed by the $Q_{1}, Q_d, L_d$ columns and the $a, b, c$ rows:

\[ \left( \begin{array}{ccc}
\vspace{5pt} a_d^2 & 2a_{d-1}a_d & 2a_d \\
\vspace{5pt} b_d^2 & 2b_{d-1}b_d & 2b_d \\
c_d^2 & 2c_{d-1}c_d & 2c_d \\
\end{array} \right).\] 

We may choose any coordinates that makes the submatrix invertible and has $c_d\neq 0$. Since the $(d+1)\times (d+1)$ submatrix is of maximal rank, the entire QRM is of maximal rank, as well.
\end{proof}

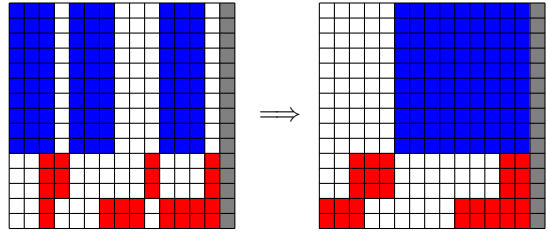
\begin{figure}[ht]
\begin{tikzpicture}
\fill[blue] (0,1) rectangle (3/5,3);
\fill[blue] (4/5,1) rectangle (7/5,3);
\fill[blue] (2,1) rectangle (13/5,3);
\fill[red]  (2/5,0) rectangle (3/5,2/5);
\fill[red]  (6/5,0) rectangle (9/5,2/5);
\fill[red]  (2,0) rectangle (14/5,2/5);
\fill[red]  (2/5,2/5) rectangle (4/5,1);
\fill[red]  (9/5,2/5) rectangle (2,1);
\fill[red]  (13/5,2/5) rectangle (14/5,1);
\fill[black!50]  (14/5,0) rectangle (3,3);
\foreach \i in {0,...,15}{
  \draw [-] (0,\i /5) to (3,\i /5);
  \draw [-] (\i /5,0) to (\i /5,3);
}
\node (su) at (18/5, 1.5){$\Longrightarrow$};
\end{tikzpicture}
\begin{tikzpicture}
\fill[blue] (1,1) rectangle (14/5,3);
\fill[red]  (0,0) rectangle (3/5,2/5);
\fill[red]  (9/5,0) rectangle (14/5,2/5);
\fill[red]  (12/5,2/5) rectangle (14/5,1);
\fill[red]  (2/5,2/5) rectangle (1,1);
\fill[black!50]  (14/5,0) rectangle (3,3);
\foreach \i in {0,...,15}{
  \draw [-] (0,\i /5) to (3,\i /5);
  \draw [-] (\i /5,0) to (\i /5,3);
}
\end{tikzpicture}
\caption{The induction step on $H_3$ to get $H_4$, where non-empty entries are marked. The matrix columns are rearranged to illustrate the block triangular form. }
\label{matrix}
\end{figure}

By applying Proposition \ref{eq}, we obtain generic local rigidity for the graph attachments in consideration. The remainder of the proof is almost identical to the specific case of $\C(3,5)$ in $\R^5$.\footnote{See Frank and Jiang \cite{fj} for a complete proof.}

\begin{proof}
We must show that the attached graph is $(d+1)$-connected, generically redundantly rigid, and not generically globally rigid.

Since $G$ is $(d+1)$-connected, the only possibility for disconnecting the graph is deleting the vertices from $\C(i,d)$. However, we would have to delete all the vertices of $G_1$ and $G_4$, $G_1$ and $G_3$, or $G_2$ and $G_4$, and each of those pairs has at least $d+1$ vertices. Thus, the attached graph is $(d+1)$-connected.

When $G=K_j$, where $j \geq d+1$, the attachment is GLR by repeated application of Proposition \ref{lat}. For the general case, $|V_G| \geq d+1$, so we can compare the flexes of $\C(i,d)\attach G$ to $\C(i,d)\attach K_{|V_G|}$. Suppose a non-trivial flex of $\C(i,d)\attach G$ exists. Since $G$ is assumed to be generically locally rigid, that flex must be a Euclidean motion on $G$. However, this same flex could be applied to $\C(i,d)\attach K_{|V_G|}$ and still be non-trivial, so no such flex exists.

By Proposition \ref{eq}, $\C(i,d) \attach K_{d+1}$ is GLR in $\R^d$ by moving vertices with no extraneous edges to a different bipartite class. The space of stresses for $\C(i,d) \attach K_{d+1}$ has dimension 

\begin{align*}
\Omega(\C(i,d) \attach K_{d+1}, p) &= e-vd+\binom{d+1}{2} \\
&= (2d-i)(d+i-1) + \binom{d-1}{2} + 1 - (3d - 1)d + \binom{d+1}{2} \\
&= (i-2)(d-i-1).
\end{align*}

However, from Theorem \ref{brz}, the space of stresses for the complete bipartite graph $K_{2d-i,d-i-1}$ is also $(i-2)(d-i-1)$. That implies that any non-zero stress on $\C(i,d) \attach K_{d+1}$ is zero on extraneous edges and possibly non-zero on bipartite edges. By Proposition \ref{red}, the extraneous edges are not redundant, while the bipartite edges are. The edge set of $\C(i,d)\attach G$ is the disjoint union of the edge sets of $\C(i,d)$ and $G$. Removing an edge from $G$ leaves the graph locally rigid since $G$ is redundantly rigid. Every edge $e$ of $\C(i,d)$ is redundant in the graph $\C(i,d) \attach K_j$, so $\C(i,d)-\{e\} \attach K_j$ is generically locally rigid. By the same argument, any flex of $\C(i,d)-\{e\} \attach G$ is a flex of $\C(i,d)-\{e\} \attach K_{|V_G|}$, so $\C(i,d)-\{e\} \attach G$ is generically locally rigid. Thus, we have generic redundant rigidity for $\C(i,d)\attach G$.

Since not all edges of $\C(i,d)\attach K_{d+1}$ are redundant, the graph is not generically globally rigid by Theorem \ref{hend}. For any two equivalent frameworks of $G$, two corresponding globally rigid subframeworks are congruent, so adding a vertex to a graph and attaching it to vertices of a globally rigid subgraph preserves global non-rigidity. Thus, $\C(i,d)\attach K_j$ where $j > d+1$ is not GGR. Since $G$ is a factor of $K_{|V_G|}$, $\C(i,d)\attach G$ is not generically globally rigid either by Proposition \ref{ggrsub}.
\end{proof}

We can also exhibit more graph attachments based on different families of 4-chains. In general, for graph attachments that attach to $d+1$ vertices, we found that the sum of the middle two arguments of the 4-chain must be $x(d-x+1)$, which we denote $v(d,x)$. The following generalization can be proven using the same techniques.

\begin{thm}
\label{gax}
Let $G$ be a generically redundantly rigid, $(d+1)$-connected graph. Then
\[C_{x,i,v(d,x)-i,(d+1)-x} \attach G,\]
where $x < i < v(d,x)+x-d-1$, is generically almost-globally rigid in $\R^d$.
\end{thm}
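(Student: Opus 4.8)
The plan is to follow the proof of Theorem \ref{ga} almost verbatim, with $2$ replaced by the general parameter $x$; the conditions $x<i<v(d,x)+x-d-1$ play exactly the role that $2<i<d-1$ did there. The first step is to recognize the attachment to a complete graph as a balanced joined graph. Tracing through the vertex amalgamation, the two chain-ends $G_1,G_4$ (of sizes $x$ and $(d+1)-x$) are glued onto the $d+1$ vertices of $K_{d+1}$ and become complete factors, so
\[C_{x,i,v(d,x)-i,(d+1)-x}\attach K_{d+1}=(K_x\cup E_{v(d,x)-i})+(K_{d+1-x}\cup E_i).\]
This joined graph has $v(d,x)+d+1$ vertices and $\binom{x}{2}+\binom{d+1-x}{2}$ extraneous edges, and a short computation shows these sum to exactly $\binom{d+2}{2}$, so its QRM is square of that size, which is the situation in which one checks for full rank.

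The only genuinely new ingredient is the generalization of the Lemma, proved as a statement about the generic rank of the QRM (which depends only on $d$ and $x$, not on $i$, since the QRM is built from the vertices and the extraneous edges alone). I would induct on $d$ with $x$ fixed. The base case is $d=x$, where $K_{d+1-x}=K_1$ and the graph is obtained from $K_x$ by repeatedly attaching vertices joined to all $x=d$ of its vertices; this is generically locally rigid by Proposition \ref{lat}, hence has no quadric flex and so has generically full-rank QRM. For the step $d-1\to d$, embed a generic $(d-1)$-dimensional framework with last coordinate $0$. Passing to $\R^d$ adds $d+1$ new columns (the quadratic terms $Q_1,\dots,Q_d$ that involve the new coordinate and the linear term $L_d$) and, correspondingly, exactly $x+1$ new vertices and $d-x$ new extraneous edges, i.e. $d+1$ new rows; the old entries vanish in the new columns, so the QRM is block triangular and it suffices to make the new $(d+1)\times(d+1)$ block nonsingular.

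Place the $x+1$ new vertices so that only their last $x$ coordinates are nonzero, and route the $d-x$ new edges from the one new complete-graph vertex $c$ to the vertices of the old $K_{d-x}$. Splitting the new columns into the $d-x$ edge columns $Q_1,\dots,Q_{d-x}$ and the $x+1$ vertex columns $Q_{d-x+1},\dots,Q_d,L_d$, the new vertices vanish on the edge columns, so this block is again block triangular. The edge-by-edge-column block is $c_d$ times the $(d-x)\times(d-x)$ matrix of first $d-x$ coordinates of the attached $K_{d-x}$ vertices, invertible for a generic framework with $c_d\neq 0$. \textbf{The main obstacle} is the remaining $(x+1)\times(x+1)$ vertex block, whose rows have the form $(2a_{d-x+1}a_d,\dots,2a_{d-1}a_d,a_d^2,2a_d)$. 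Factoring out $a_d$ (taken nonzero) rescales each row to $(2a_{d-x+1},\dots,2a_{d-1},a_d,2)$, a point of the affine hyperplane $\{y_{x+1}=2\}$ in $\R^{x+1}$. Since that hyperplane has dimension $x$ it contains $x+1$ affinely independent points, and any points in a hyperplane off the origin are linearly independent once they are affinely independent (the constant last coordinate forces any linear dependence to be affine); choosing the new vertices to realize such points makes the block nonsingular and closes the induction. For every balanced $i$ in the stated range the QRM characterization of infinitesimal rigidity then turns generic full rank into generic local rigidity (independent of how the edgeless vertices are split, by Proposition \ref{eq}), and the flex-comparison argument of Theorem \ref{ga} extends this from $K_{d+1}$ to an arbitrary $(d+1)$-connected, generically redundantly rigid $G$.

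The three remaining conditions transfer directly. For $(d+1)$-connectivity, a disconnecting set must delete all of one of the pairs $G_1\cup G_4$, $G_1\cup G_3$, $G_2\cup G_4$, of sizes $d+1$, $x+v(d,x)-i$, and $i+(d+1-x)$; the inequalities on $i$ make the last two exceed $d+1$, so no set of at most $d$ vertices disconnects the attachment, while $G$ handles cuts inside $K_{d+1}$. For redundant rigidity, write the two class sizes $a=x+v(d,x)-i$ and $b=(d+1-x)+i$; using $a+b=v(d,x)+d+1$ and $\binom{x}{2}+\binom{d+1-x}{2}=\binom{d+2}{2}-(a+b)$, the stress count $\dim\Omega=e-(a+b)d+\binom{d+1}{2}$ collapses to $(a-d-1)(b-d-1)$, which by Theorem \ref{brz} is exactly $\dim\Omega(K_{a,b})$. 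Since $K_{a,b}$ is a factor, the zero-extension of its stresses injects into $\Omega$ with equal dimension, so every stress vanishes on the extraneous edges: by Proposition \ref{red} these are non-redundant, while $a,b\geq d+2$ and Corollary \ref{bir} make the bipartite edges redundant, giving redundant rigidity after lifting to general $G$. Finally, because not every edge is redundant, Theorem \ref{hend} shows the graph is not generically globally rigid, and Proposition \ref{ggrsub} propagates this to $G$ exactly as in Theorem \ref{ga}.
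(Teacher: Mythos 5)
Your proposal is correct and follows exactly the route the paper intends: the paper gives no separate proof of Theorem \ref{gax}, asserting only that it follows ``using the same techniques'' as Theorem \ref{ga}, and you have instantiated those techniques faithfully --- rewriting $C_{x,i,v(d,x)-i,(d+1)-x}\attach K_{d+1}$ as the balanced joined graph $(K_x\cup E_{v(d,x)-i})+(K_{d+1-x}\cup E_i)$, generalizing the Lemma by the same block-triangular induction on $d$ (with base case $d=x$ via Proposition \ref{lat} and an $(x+1)\times(x+1)$ vertex block in place of the paper's $3\times 3$ one), and transferring connectivity, the stress count $(a-d-1)(b-d-1)$, redundant rigidity, and non-global-rigidity verbatim. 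Your explicit affine-independence argument for the vertex block and the verification that the row count equals $\binom{d+2}{2}$ are correct details the paper leaves implicit.
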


However, there are still 4-chain graph attachments that escape this characterization, namely those that attach to $d+2$ or more vertices (there cannot exist any that attach to only $d$ vertices since this violates $(d+1)$-connectivity). The smallest such outlier we found was $C_{3,3,5,5}$ in $\R^6$. This motivates the following problem.

\begin{pro}
Characterize all 4-chain graph attachments.
\end{pro}

\end{document}